\UseAllTwocells \xyoption{frame} \CompileMatrices
\newtheorem{prop}{Proposition}[section]
\newtheorem{lem}[prop]{Lemma}
\newtheorem{cor}[prop]{Corollary}
\newtheorem{thm}[prop]{Theorem}
\numberwithin{equation}{section}
\DeclareMathOperator{\diag}{diag}
\title{Holomorphic Anomaly Equations For $[\mathbb{C}^5/\mathbb{Z}_5]$}
\author[Genlik]{Deniz Genlik}
\address{Department of Mathematics\\ Ohio State University\\ 100 Math Tower, 231 West 18th Ave. \\ Columbus,  OH 43210\\ USA}
\curraddr{Department of Mathematics\\ University of Illinois Urbana-Champaign\\ 1409 W. Green St.\\
273 Altgeld Hall\\ M/C 382\\
Urbana, IL 61801\\ USA}
\email{genlik@illinois.edu}
\author[Tseng]{Hsian-Hua Tseng}
\address{Department of Mathematics\\ Ohio State University\\ 100 Math Tower, 231 West 18th Ave. \\ Columbus,  OH 43210\\ USA}
\email{hhtseng@math.ohio-state.edu}
\begin{document}
\begin{abstract}
We prove holomorphic anomaly equations for $[\mathbb{C}^5/\mathbb{Z}_5]$.
\end{abstract}

\date{\today}

\maketitle

{\centering \em To the memory of Bumsig Kim\par}

\tableofcontents

\setcounter{section}{-1}

\section{Introduction}

The cyclic group $\mathbb{Z}_5$ acts naturally on $\mathbb{C}^5$ by letting its generator $1\in\mathbb{Z}_5$ act by multiplication by the fifth root of unity $$e^{\frac{2\pi\sqrt{-1}}{5}}.$$
This action commutes with the diagonal action of the torus $\mathrm{T}=(\mathbb{C}^*)^5$ on $\mathbb{C}^5$ and induces a $\mathrm{T}$-action on $[\mathbb{C}^5/\mathbb{Z}_5]$. Consequently $[\mathbb{C}^5/\mathbb{Z}_5]$ is a toric Deligne-Mumford stack.

This paper is concerned with $\mathrm{T}$-equivariant Gromov-Witten invariants of $[\mathbb{C}^5/\mathbb{Z}_5]$. By definition, these are the following integrals
\begin{equation}\label{eqn:GWinv}
    \int_{\left[\overline{M}_{g, n}^{\mathrm{orb}}\left(\left[\mathbb{C}^{5} / \mathbb{Z}_{5}\right], 0\right)\right]^{vir}} \prod_{i=1}^{n} \mathrm{ev}_{i}^{*}\left(\gamma_{i}\right)\psi_i^{k_i}. 
\end{equation}
Here, $ [\overline{M}_{g, n}^{\mathrm{orb}}\left(\left[\mathbb{C}^{5} / \mathbb{Z}_{5}\right], 0\right)]^{vir}$ is the ($\mathrm{T}$-equivariant) virtual fundamental class of the moduli space $\overline{M}_{g, n}^{\mathrm{orb}}\left(\left[\mathbb{C}^{5} / \mathbb{Z}_{5}\right], 0\right)$ of stable maps to $[\mathbb{C}^5/\mathbb{Z}_5]$. The classes $\psi_i\in H^2(\overline{M}_{g, n}^{\mathrm{orb}}\left(\left[\mathbb{C}^{5} / \mathbb{Z}_{5}\right], 0\right), \mathbb{Q})$ are descendant classes, for a detailed treatment see \cite{k}.  The evaluation maps
\begin{equation*}
  \mathrm{ev}_i: \overline{M}_{g, n}^{\mathrm{orb}}\left(\left[\mathbb{C}^{5} / \mathbb{Z}_{5}\right], 0\right)\to I[\mathbb{C}^5/\mathbb{Z}_5]  
\end{equation*}
 take values in the inertia stack $I[\mathbb{C}^5/\mathbb{Z}_5]$ of $[\mathbb{C}^5/\mathbb{Z}_5]$. The classes $\gamma_i\in H^*_\mathrm{T, Orb}([\mathbb{C}^5/\mathbb{Z}_5]):=H^*_\mathrm{T}(I[\mathbb{C}^5/\mathbb{Z}_5])$ are classes in the Chen-Ruan cohomology of $[\mathbb{C}^5/\mathbb{Z}_5]$. Standard references for the Chen-Ruan cohomology are \cite{alr} and \cite{cr}.

Let $$\lambda_0,\lambda_1, \lambda_2, \lambda_3,\lambda_{4}\in H^*_\mathrm{T}(\text{pt})=H^*(B\mathrm{T})$$ be the first Chern classes of the tautological line bundles of $B\mathrm{T}=(B\mathbb{C}^*)^5$. Then (\ref{eqn:GWinv}) takes value in $\mathbb{Q}(\lambda_0,\lambda_1, \lambda_2, \lambda_3,\lambda_{4})$.

Foundational treatments of orbifold Gromov-Witten theory can be found in many references. For compact target stacks, the original reference is \cite{agv}. For non-compact target stacks admitting torus actions, such as $[\mathbb{C}^5/\mathbb{Z}_5]$, one can define Gromov-Witten theory for them using virtual localization formula \cite{grpan}. In this case, their Gromov-Witten theory should be understood as certain {\em twisted} Gromov-Witten theory of stacks. Generalities on twisted Gromov-Witten theory of stacks can be found in \cite{ccit} and \cite{Tseng}.

The main results of this paper concern structures of Gromov-Witten invariants (\ref{eqn:GWinv}), formulated in terms of generating functions. The definition of inertia stacks implies that 
\begin{equation*}
    I[\mathbb{C}^5/\mathbb{Z}_5]=[\mathbb{C}^5/\mathbb{Z}_5]\cup\bigcup_{k=1}^{4} B\mathbb{Z}_5.
\end{equation*}
Let 
\begin{equation*}
\phi_0=1\in H^0_\mathrm{T}([\mathbb{C}^5/\mathbb{Z}_5]), \phi_k=1\in H^0_\mathrm{T}(B\mathbb{Z}_5), 1\leq k\leq 4.
\end{equation*}
Then $\{\phi_0,\phi_1, \phi_2, \phi_3,\phi_{4}\}$ is an additive basis of $H^*_\mathrm{T,Orb}([\mathbb{C}^5/\mathbb{Z}_5])$. The orbifold Poincar\'e dual $\{\phi^0,\phi^1, \phi^2, \phi^3,\phi^{4}\}$ of this basis is given by
\begin{equation*}
\phi^0=5\lambda_0\lambda_1\lambda_2\lambda_3\lambda_{4}\phi_0,\quad \phi^1=5\phi_{4},\quad \phi^2=5\phi_{3},\quad \phi^3=5\phi_{2}, \quad \phi^{4}=5\phi_1.
\end{equation*}
To simplify notation, in what follows we set \begin{equation*}
\phi_i\coloneqq\phi_j\quad\text{if }j\equiv{i}\mod{5}\quad\text{and}\quad\phi^i\coloneqq\phi^j\quad\text{if }j\equiv{i}\mod{5},
\end{equation*}
for all $i\geq{0}$ and $0\leq{j}\leq{4}$.   

For $\phi_{c_{1}}, \ldots,\phi_{c_{n}}\in H^{*}_{\mathrm{T,Orb}}\left(\left[\mathbb{C}^5/\mathbb{Z}_5\right]\right)$, define the Gromov-Witten potential by
\begin{equation}\label{eq:GW_Potential_phi_insertions}
\mathcal{F}_{g, n}^{\left[\mathbb{C}^{5} / \mathbb{Z}_{5}\right]}\left(\phi_{c_{1}}, \ldots, \phi_{c_{n}}\right)=\sum_{d=0}^{\infty} \frac{\Theta^{d}}{d !} \int_{\left[\overline{M}_{g, n+d}^{\mathrm{orb}}\left(\left[\mathbb{C}^{5} / \mathbb{Z}_{5}\right], 0\right)\right]^{v i r}} \prod_{i=1}^{n} \mathrm{ev}_{i}^{*}\left(\phi_{c_{i}}\right) \prod_{i=n+1}^{n+d} \mathrm{ev}_{i}^{*}\left(\phi_{1}\right)
\end{equation}
where $\Theta$ is a formal variable keeping track of the insertion $\phi_1$. The potential $\mathcal{F}_{g, n}^{\left[\mathbb{C}^{5} / \mathbb{Z}_{5}\right]}$ is regarded as a formal power series in $\Theta$.
We also use the standard double bracket notation for the Gromov-Witten potential throughout the paper: 
\begin{equation*}
\left\langle\left\langle\phi_{c_{1}}, \ldots, \phi_{c_{n}}\right\rangle\right\rangle_{g, n}^{\left[\mathbb{C}^{5} / \mathbb{Z}_{5}\right]}=\mathcal{F}_{g, n}^{\left[\mathbb{C}^{5} / \mathbb{Z}_{5}\right]}\left(\phi_{c_{1}}, \ldots, \phi_{c_{n}}\right).
\end{equation*}

The main results of this paper are differential equations for these generating functions $\mathcal{F}^{[\mathbb{C}^5/\mathbb{Z}_5]}_{g}$ for $g\geq{2}$ after the following specializations of equivariant parameters:  
\begin{equation}\label{eqn:specialization}
   \lambda_i=e^{\frac{2\pi\sqrt{-1}i}{5}}, \quad 0\leq i \leq 4.
\end{equation}
There are two differential equations, given precisely in Theorems \ref{thm:HAE_wrt_A2_partial} and \ref{thm:2nd_HAE} below. Theorem \ref{thm:HAE_wrt_A2_partial} is an analogue of the main result of \cite{lho-p} and \cite{lho-p2}. To the best of our knowledge, Theorem \ref{thm:2nd_HAE} does not have analogue in previous studies. Borrowing terminology from String Theory, we call these two differential equations {\em holomorphic anomaly equations} for $[\mathbb{C}^5/\mathbb{Z}_5]$.

Our approach to proving holomorphic anomaly equations is the same as that of \cite{lho-p2} and is based on the fact that genus $0$ Gromov-Witten theory of $[\mathbb{C}^5/\mathbb{Z}_5]$ yields a {\em semisimple} Frobenius structure on $H^*_\mathrm{T, Orb}([\mathbb{C}^5/\mathbb{Z}_5])$. Consequently, the {\em cohomological field theory} (in the sense of \cite{km}) associated to the Gromov-Witten theory of $[\mathbb{C}^5/\mathbb{Z}_5]$ is semisimple. The Givental-Teleman classification \cite{g3}, \cite{t} of semisimple cohomological field theories can then be applied to yield an explicit formula for $\mathcal{F}^{[\mathbb{C}^5/\mathbb{Z}_5]}_{g}$, which can be used to prove holomorphic anomaly equations.

The rest of the paper is organized as follows. In Section \ref{sec:mirror_thm}, we state the mirror theorem for $[\mathbb{C}^5/\mathbb{Z}_5]$ and study certain power series arising from the $I$-function. In Section \ref{subsec:frob}, we describe necessary ingredients of the Frobenius structure from Gromov-Witten theory of $[\mathbb{C}^5/\mathbb{Z}_5]$. In Section \ref{sec:lifts}, we study lifting to certain ring of functions of an important ingredient called the $R$-matrix. Section \ref{sec:HAE} contains the main results of this paper. In Section \ref{subsec:CohFT_formula}, we give the formula for Gromov-Witten potentials of $[\mathbb{C}^5/\mathbb{Z}_5]$ arising from Givental-Teleman classification of semisimple CohFTs. In Section \ref{subsec:HAE_proof}, we state the two holomorphic anomaly equations and use the formula in Section \ref{subsec:CohFT_formula} to prove them. 

\subsection{Acknowledgement}
D. G. is supported in part by a Special Graduate Assignment fellowship by Ohio State University's Department of Mathematics and H.-H. T. is supported in part by Simons foundation collaboration grant. 

\section{On mirror theorem}\label{sec:mirror_thm}
In this Section we discuss mirror theorem for Gromov-Witten theory of $[\mathbb{C}^5/\mathbb{Z}_5]$.

The $I$-function of $\left[\mathbb{C}^{5} / \mathbb{Z}_{5}\right]$ is defined\footnote{Here, $\langle \bullet \rangle$ is the fractional part of $\bullet$.} to be 
\begin{equation}\label{def:I-function}
I\left(x, z\right)=
        \sum_{k=0}^{\infty}\frac{x^k}{{z^k}k!}\prod_{\substack{b:0\leq b<\frac{k}{5} \\ \langle b \rangle=\langle\frac{k}{5}\rangle}}\left(1-(bz)^5\right)\phi_k.
\end{equation}
It is easy to see that $I$-function (\ref{def:I-function}) of $[\mathbb{C}^5/\mathbb{Z}_5]$ is of the form
\begin{equation}\label{eq:IfuncAsSumIk}
I\left(x, z\right)=\sum_{k=0}^{\infty}\frac{I_k(x)}{z^k}\phi_k.
\end{equation}

The small $J$-function for $[\mathbb{C}^5/\mathbb{Z}_5]$ is defined by 
\begin{equation*}
J\left(\Theta,z\right)=\phi_0+\frac{\Theta\phi_1}{z}+\sum_{i=0}^{n-1}\phi^i\left\langle\left\langle\frac{\phi_i}{z(z-\psi)}\right\rangle\right\rangle_{0,1}^{[\mathbb{C}^5/\mathbb{Z}_5]}.
\end{equation*}
The following is a consequence of the main result of \cite{ccit}.

\begin{prop} We have the following mirror identity,
\begin{equation}\label{eq:smallmirrorthm}
J\left(T(x),z\right)=I(x,z),
\end{equation}
with the mirror transformation\footnote{The gamma function is defined by $\Gamma(z)=\int_0^{\infty} t^{z-1} e^{-t} d t$ where $\Re(z)>0$. One of its fundamental properties is $\Gamma (z+1)=z \Gamma (z)$. We mainly use the gamma function in the expression of mirror transformation (\ref{eq:smallmirrorthm}) to give a compact formula.}
\begin{equation}\label{eq:mirrortransform}
T(x)=I_1(x)=\sum_{k\geq 0}\frac{(-1)^{5k}x^{5k+1}}{(5k+1)!}\left(\frac{\Gamma\left(k+\frac{1}{5}\right)}{\Gamma\left(\frac{1}{5}\right)}\right)^5.
\end{equation}
\end{prop}
Define the operator $$D:\mathbb{C}[[x]]\rightarrow x\mathbb{C}[[x]]$$ by
\begin{equation*}
Df(x)=x\frac{df(x)}{dx}.
\end{equation*}
Next, we consider the following series\footnote{Here, our $L$ differs from $L$ defined in \cite{lho} by a sign. Although the definitions of $C_1$ and $C_2$ look different from those in \cite{lho}, it is easy to check that these definitions match with those in \cite{lho}.} in $\mathbb{C}[[x]]$ arising from the $I$-function, which will be useful later:
\begin{equation}
\begin{aligned}
L=&x\left(1+\left(\frac{x}{5}\right)^5\right)^{-\frac{1}{5}},\\
C_1=&DI_1,\\
C_2=&D\left(\frac{DI_2}{C_1}\right),\\
C_3=&D\left(\frac{D\left(\frac{DI_3}{C_1}\right)}{C_2}\right).
\end{aligned}
\end{equation}
It is easy to verify that
\begin{equation}\label{eq:DLoverL}
\frac{DL}{L}=1-\frac{L^5}{5^5}.
\end{equation}
In \cite[Proposition 4]{lho}, the following identity is given:
\begin{equation}\label{eq:C12C22C3L5}
C_1^2C_2^2C_3=L^5.
\end{equation}

The following lemma is a direct result of the definition (\ref{eq:GW_Potential_phi_insertions}) of Gromov-Witten potential and the mirror map $\Theta=T(x)$.
\begin{lem}\label{lem:partialwrtmirrormap}
For $k\geq{1}$, we have
\begin{equation*}
\frac{\partial^k\mathcal{F}_{g, n}^{\left[\mathbb{C}^{5} / \mathbb{Z}_{5}\right]}\left(\phi_{c_{1}}, \ldots, \phi_{c_{n}}\right)}{\partial T^k}=\mathcal{F}_{g, n+k}^{\left[\mathbb{C}^{5} / \mathbb{Z}_{5}\right]}(\phi_{c_{1}}, \ldots, \phi_{c_{n}},\underbrace{\phi_1,\ldots,\phi_1}_{k-\text{many}}).
\end{equation*}
\end{lem}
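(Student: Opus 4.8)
The plan is to regard $\mathcal{F}_{g,n}^{[\mathbb{C}^5/\mathbb{Z}_5]}$ as a formal power series in the single variable $T=\Theta$ (under the mirror map the insertion-counting variable $\Theta$ in the defining series (\ref{eq:GW_Potential_phi_insertions}) is named $T$, so $\partial/\partial T$ acts on the $\Theta$-dependence of that series) and to differentiate (\ref{eq:GW_Potential_phi_insertions}) term by term. Since everything takes place in the ring of formal power series, term-by-term differentiation is legitimate, and the entire argument reduces to bookkeeping with factorials and with the labeling of marked points.

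First I would treat the case $k=1$. Applying $\partial/\partial T$ to
\begin{equation*}
\mathcal{F}_{g,n}^{[\mathbb{C}^5/\mathbb{Z}_5]}(\phi_{c_1},\ldots,\phi_{c_n})=\sum_{d=0}^{\infty}\frac{T^d}{d!}\int_{[\overline{M}_{g,n+d}^{\mathrm{orb}}([\mathbb{C}^5/\mathbb{Z}_5],0)]^{vir}}\prod_{k=1}^{n}\mathrm{ev}_i^*(\phi_{c_k})\prod_{i=n+1}^{n+d}\mathrm{ev}_i^*(\phi_1)
\end{equation*}
sends each coefficient $T^d/d!$ to $T^{d-1}/(d-1)!$ and annihilates the $d=0$ summand, using $d/d!=1/(d-1)!$. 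Reindexing $d\mapsto d+1$, the resulting $d$-th summand is an integral over $\overline{M}_{g,n+1+d}^{\mathrm{orb}}([\mathbb{C}^5/\mathbb{Z}_5],0)$ whose first $n$ marked points carry $\phi_{c_1},\ldots,\phi_{c_n}$ and whose remaining $d+1$ marked points (positions $n+1,\ldots,n+1+d$) all carry $\phi_1$. This is precisely the $d$-th term in the definition of $\mathcal{F}_{g,n+1}^{[\mathbb{C}^5/\mathbb{Z}_5]}(\phi_{c_1},\ldots,\phi_{c_n},\phi_1)$, so summing over $d$ establishes the identity for $k=1$.

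Finally, the general statement follows by induction on $k$. Assuming the result for $k-1$, I would apply the $k=1$ case to the potential $\mathcal{F}_{g,n+k-1}^{[\mathbb{C}^5/\mathbb{Z}_5]}(\phi_{c_1},\ldots,\phi_{c_n},\phi_1,\ldots,\phi_1)$ carrying $k-1$ copies of $\phi_1$; differentiating once more in $T$ appends a further $\phi_1$-insertion and raises the number of marked points by one, yielding the $k$-th derivative. The hard part, such as it is, is purely the bookkeeping: one must verify the cancellation $d/d!=1/(d-1)!$ together with the shift of summation index, and confirm that the extra insertions produced by differentiation are exactly the distinguished class $\phi_1$ used to define the potential, so that the reindexed series is genuinely another Gromov-Witten potential of the same form.
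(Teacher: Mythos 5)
Your proposal is correct and is exactly the argument the paper has in mind: the paper offers no written proof, asserting the lemma is ``a direct result of the definition (\ref{eq:GW_Potential_phi_insertions})\ldots and the mirror map $\Theta=T(x)$,'' and your term-by-term differentiation of the $\Theta$-series, with the reindexing $d\mapsto d+1$ and induction on $k$, is the standard way to fill in those details.
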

We further define the following series:
\begin{equation}
\begin{aligned}
X_1=&\frac{DC_1}{C_1},\\
X_2=&\frac{DC_2}{C_2},\\
A_1=&\frac{1}{L}\left(\frac{DL}{L}-X_1\right),\\
A_2=&\frac{1}{L}\left(2\frac{DL}{L}-X_1-X_2\right),\\
B_i=&\frac{1}{5^i}(D+X_1)^{i-1}X_1\quad\text{for}\quad 1\leq{i}\leq{4}.
\end{aligned}
\end{equation}
In \cite[Section 3]{lho}, the following equations are given:
\begin{align}
B_4=&\left(1-\frac{L^5}{5^5}\right)\left(2 B_3-\frac{7}{5} B_2+\frac{2}{5} B_1-\frac{24}{625}\right), \\
DX_2=&-10\left(1-\frac{L^5}{5^5}\right)+10\left(1-\frac{L^5}{5^5}\right)X_1+5\left(1-\frac{L^5}{5^5}\right)X_2-2X_1^2-4DX_1-2X_1X_2-X_2^2 .\label{eq:DX2intermsofX1X2DX1}
\end{align}
Since there is a linear relation between $\{A_1,A_2\}$ and $\{X_1,X_2\}$ with coefficients from the ring $\mathbb{C}[L^{\pm 1}]$, we can rewrite these two equations in terms of $A_i$'s and their $D$ derivatives. For example, equation (\ref{eq:DX2intermsofX1X2DX1}) can be rewritten as
\begin{equation}\label{eq:DA2_intermsof_A1DA1A2}
DA_2=LA_1^2+LA_2^2-DA_1-15\left(1-\frac{L^5}{5^5}\right)\frac{L^5}{5^5}.
\end{equation}
Moreover, these linear relations show that the differential ring
\begin{equation*}
\mathbb{C}[L^{\pm 1}][A_1,A_2,DA_1,DA_2,D^2A_1,D^2A_2,\ldots]
\end{equation*}
is a quotient of the free polynomial ring
\begin{equation*}
\mathbb{F}\coloneqq\mathbb{C}[L^{\pm 1}][A_1,DA_1,D^2A_1,A_2].
\end{equation*}

\section{Frobenius Structures}\label{subsec:frob}
In this Section, we spell out details of the Frobenius structure on $H^{*}_{\mathrm{T,Orb}}\left(\left[\mathbb{C}^5/\mathbb{Z}_5\right]\right)$ defined using genus $0$ Gromov-Witten theory of $[\mathbb{C}^5/\mathbb{Z}_5]$. We refer to \cite{lp} for generalities of Frobenius structures.

Let $\gamma=\sum_{i=0}^{4}t_i\phi_i\in H^{*}_{\mathrm{T,Orb}}\left(\left[\mathbb{C}^5/\mathbb{Z}_5\right]\right)$. The full genus $0$ Gromov-Witten potential is defined to be
\begin{equation}\label{eqn:full_GW_potential}
    \mathcal{F}_0^{\left[\mathbb{C}^5 / \mathbb{Z}_5\right]}(t, \Theta)
    =\sum_{n=0}^{\infty} \sum_{d=0}^{\infty} \frac{1}{n ! d !}\int_{\left[\overline{M}_{0, n+d}^{\mathrm{orb}}\left(\left[\mathbb{C}^{5} / \mathbb{Z}_{5}\right], 0\right)\right]^{v i r}}  \prod_{i=1}^n \operatorname{ev}_i^*(\gamma) \prod_{i=n+1}^{n+d} \mathrm{ev}_i^*\left(\Theta \phi_1\right).
\end{equation}

In the basis $\{\phi_0,\phi_1,\phi_2, \phi_3,\phi_{4}\}$ and under the specialization (\ref{eqn:specialization}), the orbifold Poincar\'e pairing 
\begin{equation*}
    g(-,-): H^{*}_{\mathrm{T,Orb}}\left(\left[\mathbb{C}^5/\mathbb{Z}_5\right]\right)\times H^{*}_{\mathrm{T,Orb}}\left(\left[\mathbb{C}^5/\mathbb{Z}_5\right]\right)\to \mathbb{Q}(\lambda_0,\lambda_1, \lambda_2, \lambda_3,\lambda_{4})
\end{equation*}
has the matrix representation 
\begin{equation}
G=\frac{1}{5}
\begin{bmatrix}
1 & 0 & 0 & 0 & 0 \\
0 & 0 & 0 & 0 & 1 \\
0 & 0 & 0 & 1 & 0 \\
0 & 0 & 1 & 0 & 0 \\
0 & 1 & 0 & 0 & 0
\end{bmatrix}.
\end{equation}
The quantum product $\bullet_\gamma$ at $\gamma\in H^{*}_{\mathrm{T,Orb}}\left(\left[\mathbb{C}^5/\mathbb{Z}_5\right]\right)$ is a product structure on $H^{*}_{\mathrm{T,Orb}}\left(\left[\mathbb{C}^5/\mathbb{Z}_5\right]\right)$. It can be defined as follows:
\begin{equation*}
    g(\phi_i\bullet_\gamma \phi_j, \phi_k):=\frac{\partial^3}{\partial t_i\partial t_j\partial t_k}\mathcal{F}_0^{\left[\mathbb{C}^5 / \mathbb{Z}_5\right]}(t, \Theta).
\end{equation*}
In what follows, we focus on the quantum product $\bullet_{\gamma=0}$ at $\gamma=0\in H^{*}_{\mathrm{T,Orb}}\left(\left[\mathbb{C}^5/\mathbb{Z}_5\right]\right)$, which we denote by $\bullet$. Note that $\bullet$ still depends on $\Theta$.

It is proved in \cite[Section 5]{lho} that the quantum product at $0\in H^{*}_{\mathrm{T,Orb}}\left(\left[\mathbb{C}^5/\mathbb{Z}_5\right]\right)$ is semisimple with the idempotent basis $\{e_0,e_1, e_2, e_3 ,e_4\}$, that is,
\begin{equation*}
e_i\bullet e_j=\delta_{i,j}e_j.
\end{equation*}
The corresponding normalized idempotent basis $\{\widetilde{e}_0,\widetilde{e}_1, \widetilde{e}_2, \widetilde{e}_3,\widetilde{e}_4\}$ is given by
\begin{equation*}
\widetilde{e}_i=5e_i\quad\text{for}\quad 0\leq{i}\leq{4}.
\end{equation*}
In \cite[Section 5]{lho}, the transition matrix given by $\Psi_{ij}=g(\widetilde{e}_i,\phi_j)$ is calculated to be
\begin{equation*}
\Psi=\frac{1}{5}
\begin{bmatrix}
1 & \frac{L}{C_1} & \frac{L^2}{C_1 C_2} & \frac{C_1 C_2}{L^2} & \frac{C_1}{L} \\
1 & \zeta \frac{L}{C_1} & \zeta^2 \frac{L^2}{C_1 C_2} & \zeta^3 \frac{C_1 C_2}{L^2} & \zeta^4 \frac{C_1}{L} \\
1 & \zeta^2 \frac{L}{C_1} & \zeta^4 \frac{L^2}{C_1 C_2} & \zeta \frac{C_1 C_2}{L^2} & \zeta^3 \frac{C_1}{L} \\
1 & \zeta^3 \frac{L}{C_1} & \zeta \frac{L^2}{C_1 C_2} & \zeta^4 \frac{C_1 C_2}{L^2} & \zeta^2 \frac{C_1}{L} \\
1 & \zeta^4 \frac{L}{C_1} & \zeta^3 \frac{L^2}{C_1 C_2} & \zeta^2 \frac{C_1 C_2}{L^2} & \zeta \frac{C_1}{L}
\end{bmatrix}.
\end{equation*}

Let $\left\{u^{0},u^1, u^2, u^3,u^{4}\right\}$ be canonical coordinates associated to the idempotent basis $\left\{e_{0},e_1, e_2, e_3, e_{4}\right\}$ which satisfy
\begin{equation*}
u^{\alpha}\left(t_i=0,\Theta=0\right)=0.
\end{equation*}
By \cite[Lemma 6]{lho}, we have
\begin{equation}\label{canonicalcoorder2}
\frac{du^{\alpha}}{dx}=\zeta^{\alpha}L\frac{1}{x}
\end{equation}
at $t=0$, for $0\leq\alpha\leq{4}$.

The full genus $0$ Gromov-Witten potential (\ref{eqn:full_GW_potential}) satisfies the following property
\begin{equation*}
    \mathcal{F}_0^{\left[\mathbb{C}^5 / \mathbb{Z}_5\right]}(t, \Theta)=\mathcal{F}_0^{\left[\mathbb{C}^5 / \mathbb{Z}_5\right]}(t|_{t_1=0}, \Theta+t_1).
\end{equation*}
that is, $\mathcal{F}_0^{\left[\mathbb{C}^5 / \mathbb{Z}_5\right]}(t, \Theta)$ depends on $t_1$ and $\Theta$ through $\Theta+t_1$. In particular, the operator
\begin{equation}\label{annihilatoroperator}
\frac{\partial}{\partial{t_1}}-\frac{\partial}{\partial\Theta}
\end{equation}
annihilates $\mathcal{F}_0^{\left[\mathbb{C}^5 / \mathbb{Z}_5\right]}(t, \Theta)$.

Denote by 
\begin{equation*}
    R(z)=\text{Id}+\sum_{k\geq 1} R_k z^k\in \text{End}(H^*_{\mathrm{T,Orb}}([\mathbb{C}^5/\mathbb{Z}_5]))[[z]]
\end{equation*}
the $R$-matrix of the Frobenius structure associated to the ($\mathrm{T}$-equivariant) Gromov-Witten theory of $[\mathbb{C}^5/\mathbb{Z}_5]$ near the semisimple point $0$. The $R$-matrix plays a central role in the Givental-Teleman classification of semisimple cohomological field theories. By definition of $R$, the symplectic condition 
\begin{equation*}
    R(z)\cdot R(-z)^*=\text{Id}
\end{equation*}
holds. The following flatness equation
\begin{equation}\label{eqn:defn_of_R}
z(d\Psi^{-1})R+z\Psi^{-1}(dR)+\Psi^{-1}R (dU)-\Psi^{-1}(dU) R=0    
\end{equation}
also holds, see \cite[Section 4.6]{lp} and \cite[Proposition 1.1]{g1}. Here $d=\frac{d}{dt}$.

Since $\mathcal{F}_0^{\left[\mathbb{C}^5/ \mathbb{Z}_5\right]}(t, \Theta)$ depends on $t_1$ and $\Theta$ through $\Theta+t_1$, it follows that $\Psi$ and $R(z)$ also depend on $t_1$ and $\Theta$ through $\Theta+t_1$. So we have\footnote{An argument for this (written for a different target space) from the CohFT viewpoint can be found in \cite[Section 3.3]{pt}.} 
\begin{equation*}
\frac{\partial}{\partial t_1}\Psi=\frac{\partial}{\partial \Theta} \Psi, \quad
    \frac{\partial}{\partial t_1}R(z)=\frac{\partial}{\partial \Theta} R(z).
\end{equation*}
In equation (\ref{eqn:defn_of_R}), we set $t_{\neq 1}=0$ and only consider $\frac{d}{dt_1}$. It follows that (\ref{eqn:defn_of_R}) can be written as 
\begin{equation*}
   z(\frac{d}{d\Theta}\Psi^{-1})R+z\Psi^{-1}(\frac{d}{d\Theta}R)+\Psi^{-1}R (\frac{d}{d\Theta}U)-\Psi^{-1}(\frac{d}{d\Theta}U) R=0. 
\end{equation*}
Since $$\frac{d}{d\Theta}=\frac{dx}{d\Theta}\frac{d}{dx},$$
after cancelling $\frac{dx}{d\Theta}$ and multiplying by $x$, we rewrite the above equation as 
\begin{equation*}
   z(x\frac{d}{dx}\Psi^{-1})R+z\Psi^{-1}(x\frac{d}{dx}R)+\Psi^{-1}R (x\frac{d}{dx}U)-\Psi^{-1}(x\frac{d}{dx}U) R=0. 
\end{equation*}
By equating coefficients of $z^k$, we see that 
\begin{equation}\label{flatness1}
\Psi\left(D\Psi^{-1}\right)R_{k-1}+DR_{k-1}+R_k\left(DU\right)-\left(DU\right)R_k=0
\end{equation}
or equivalently,
\begin{equation}\label{flatness2}
D\left(\Psi^{-1}R_{k-1}\right)+\left(\Psi^{-1}R_k\right)DU-\Psi^{-1}\left(DU\right)\Psi\left(\Psi^{-1}R_k\right)=0   
\end{equation}
where $D=x\frac{d}{dx}$ as before.  

Now set $t_1=0$. By equation (\ref{canonicalcoorder2}), we have 
\begin{equation}\label{DU}
DU=\diag(L,L\zeta,L\zeta^2,L\zeta^3,L\zeta^4).
\end{equation}

Let $P_{i,j}^k$ denote the $(i,j)$-entry of the matrix defined by $P_k=\Psi^{-1}R_k$ after being restricted to the semisimple point $0\in H^*_{\mathrm{T,Orb}}\left([\mathbb{C}^5/\mathbb{Z}_5]\right)$. Set \begin{equation*}
\widetilde{P}_{i,j}^k=\frac{L^i}{K_i}P_{i,j}^k\zeta^{(k+i)j}
\end{equation*}
where $0\leq i,j \leq{4}$ and $k\geq 0$. Then, the flatness equation (\ref{flatness2}) reads as
\begin{equation}\label{eq:modifiedflatness}
\begin{aligned}
\widetilde{P}_{4,j}^k=&\widetilde{P}_{0,j}^k+\frac{1}{L}D\widetilde{P}_{0,j}^{k-1},\\
\widetilde{P}_{3,j}^k=&\widetilde{P}_{4,j}^k+\frac{1}{L}D\widetilde{P}_{4,j}^{k-1}+A_1\widetilde{P}_{4,j}^{k-1},\\
\widetilde{P}_{2,j}^k=&\widetilde{P}_{3,j}^k+\frac{1}{L}D\widetilde{P}_{3,j}^{k-1}+A_2\widetilde{P}_{3,j}^{k-1},\\
\widetilde{P}_{1,j}^k=&\widetilde{P}_{2,j}^k+\frac{1}{L}D\widetilde{P}_{2,j}^{k-1}-A_2\widetilde{P}_{2,j}^{k-1},\\
\widetilde{P}_{0,j}^k=&\widetilde{P}_{1,j}^k+\frac{1}{L}D\widetilde{P}_{1,j}^{k-1}-A_1\widetilde{P}_{1,j}^{k-1}.
\end{aligned}
\end{equation}
We call equation (\ref{eq:modifiedflatness}) the \textit{modified flatness equations}.

In \cite{zz}, properties of some hypergeometric series associated with mirror symmetry are analyzed, and certain formal power series are associated to these hypergeometric series. They obtained equations like (\ref{eq:C12C22C3L5}), and showed that some parts of the asymptotic expansions of those hypergeometric series lie in polynomial rings in those formal power series. By the methodology of \cite{zz}, we obtain the following result\footnote{The same result for the total space $K\mathbb{P}^4$ of the canonical bundle of $\mathbb{P}^4$ can be inferred from \cite[Proposition 14]{lho}. Hence, Lemma \ref{lem:P0jk_is_in_CLplusminus} can also be deducted from the main result of \cite{lho}.}.
\begin{lem}\label{lem:P0jk_is_in_CLplusminus}
We have $\widetilde{P}_{0,j}^k\in\mathbb{C}[L^{\pm 1}]$ for all $0\leq{j}\leq{4}$ and $k\geq{0}$.
\end{lem}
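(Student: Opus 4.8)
The plan is to argue by induction on $k$, using the modified flatness equations (\ref{eq:modifiedflatness}) to propagate information from level $k-1$ to level $k$, and the symplectic condition on $R$ to pin down the single degree of freedom that flatness leaves undetermined at each level. For the base case $k=0$ one has $R_0=\mathrm{Id}$, hence $P_0=\Psi^{-1}$. Since $\{\widetilde e_i\}$ is $g$-orthonormal we have $\Psi G^{-1}\Psi^{T}=\mathrm{Id}$, so $\Psi^{-1}=G^{-1}\Psi^{T}$, and a short computation shows its zeroth row is $(1,1,1,1,1)$. Thus $P_{0,j}^{0}=1$ and $\widetilde P_{0,j}^{0}$ is a constant, which lies in $\mathbb{C}[L^{\pm1}]$ for every $j$.

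For the inductive step assume $\widetilde P_{0,j}^{m}\in\mathbb{C}[L^{\pm1}]$ for all $m<k$ and all $j$. I would first record that $\mathbb{C}[L^{\pm1}]$ is closed under $D$: by (\ref{eq:DLoverL}) we have $DL=L-L^{6}/5^{5}$, so $D$, and hence $\tfrac1L D$, maps $\mathbb{C}[L^{\pm1}]$ into itself. Reading (\ref{eq:modifiedflatness}) from the top, the first equation writes $\widetilde P_{4,j}^{k}$ as the unknown seed $\widetilde P_{0,j}^{k}$ plus a term $\tfrac1L D\widetilde P_{0,j}^{k-1}$ already in $\mathbb{C}[L^{\pm1}]$; the next three equations then express $\widetilde P_{3,j}^{k},\widetilde P_{2,j}^{k},\widetilde P_{1,j}^{k}$ through the same seed together with level-$(k-1)$ data, now involving $A_1,A_2$ and so landing a priori only in the larger ring $\mathbb{F}$. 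Summing all five equations in (\ref{eq:modifiedflatness}) makes every level-$k$ term cancel, so the fifth equation is a telescoping consistency relation and does \emph{not} determine the seed $\widetilde P_{0,j}^{k}$.

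To supply the missing equation I would invoke the symplectic condition $R(z)R(-z)^{*}=\mathrm{Id}$. Writing $R_a=\Psi P_a$ and $R_b^{*}=G^{-1}P_b^{T}\Psi^{T}G$ and conjugating away the outer factors $\Psi(\,\cdot\,)\Psi^{T}G$, the order-$z^{k}$ part becomes
\[
\sum_{a+b=k}(-1)^{b}\,P_a\,G^{-1}\,P_b^{T}=0,\qquad k\ge 1 .
\]
The terms with $a=k$ or $b=k$ are linear in $P_k$, while the remaining terms are bilinear in the $P_m$ with $m<k$. These linear relations, together with the flatness equations that already give rows $1,\dots,4$ in terms of the row-$0$ seed, determine $\widetilde P_{0,j}^{k}$ (the degrees-of-freedom count per column is: flatness of rank $4$ plus the symplectic relation of rank $1$, matching the five unknowns), with a right-hand side bilinear in lower-order $\widetilde P_{i,j}^{m}$.

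The main obstacle is to show that this determined seed really lies in $\mathbb{C}[L^{\pm1}]$ and not merely in $\mathbb{F}$. Although the intermediate rows $\widetilde P_{1,j}^{k},\widetilde P_{2,j}^{k},\widetilde P_{3,j}^{k}$ genuinely involve $A_1$ and $A_2$, I expect all $A$-dependence to cancel in the symplectic expression for the $i=0$ row; proving this cancellation is the technical heart of the argument and is exactly where the methodology of \cite{zz} enters. Concretely, I would rewrite every occurrence of $DA_2$, $D^{2}A_1$ and the products of $A_i$'s using the relations (\ref{eq:DA2_intermsof_A1DA1A2}) and (\ref{eq:C12C22C3L5}), reduce modulo the defining relations of $\mathbb{F}=\mathbb{C}[L^{\pm1}][A_1,DA_1,D^{2}A_1,A_2]$, and check that the coefficient of each nontrivial monomial in $A_1,DA_1,D^{2}A_1,A_2$ vanishes, leaving a Laurent polynomial in $L$. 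Closure of $\mathbb{C}[L^{\pm1}]$ under $D$ then completes the induction.
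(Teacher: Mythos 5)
Your base case and your structural analysis of the modified flatness equations are sound: the five relations in (\ref{eq:modifiedflatness}) do telescope, so they express $\widetilde P_{4,j}^k,\dots,\widetilde P_{1,j}^k$ in terms of the undetermined seed $\widetilde P_{0,j}^k$ plus level-$(k-1)$ data, and $\mathbb{C}[L^{\pm1}]$ is indeed $D$-stable by (\ref{eq:DLoverL}). However, the step where you close the system is where the argument breaks. The symplectic condition does \emph{not} have the rank you need at every level: a solution of the flatness equation together with $R(z)R(-z)^{*}=\mathrm{Id}$ is unique only up to right multiplication by $\exp\bigl(\sum_{k\ \mathrm{odd}}a_k z^k\bigr)$ with $a_k$ constant diagonal in the normalized idempotent frame. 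Concretely, the order-$z^k$ symplectic relation constrains $R_k+(-1)^kR_k^{*}$, and the diagonal (seed) ambiguity $\Lambda$ enters this combination as $(1+(-1)^k)\Lambda$, which vanishes identically for $k$ odd. So your degrees-of-freedom count ``flatness of rank $4$ plus symplectic of rank $1$'' fails for all odd $k$, and the seed $\widetilde P_{0,j}^k$ is genuinely not determined by the relations you invoke. In the paper's setting the seed is pinned down by an independent input, namely the explicit computation of the $R$-matrix from the stationary-phase asymptotics of the $I$-function in \cite{lho}; the Zagier--Zinger identities are what make \emph{that} explicit answer a Laurent polynomial in $L$, rather than serving to cancel $A$-monomials in a symplectic identity.

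Second, even granting a determination of the seed, the statement you must prove --- that the resulting expression has no dependence on $A_1$, $DA_1$, $D^2A_1$, $A_2$ and lies in $\mathbb{C}[L^{\pm1}]$ rather than merely in $\mathbb{F}$ --- is asserted (``I expect all $A$-dependence to cancel'') but not established. That cancellation \emph{is} the lemma; deferring it to ``the methodology of \cite{zz}'' without exhibiting the mechanism leaves the proof incomplete. To repair the argument you would need either (a) to import the closed-form asymptotic expansion of the $I$-function and the Zagier--Zinger polynomiality theorem to identify $\widetilde P_{0,j}^k$ explicitly, or (b) to show that the consistency relation obtained by summing (\ref{eq:modifiedflatness}) at level $k+1$, combined with the even-order symplectic constraints, forces the $A$-coefficients of the seed to vanish --- neither of which is carried out in the proposal.
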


\section{Lift of modified R-matrix}\label{sec:lifts}
\subsection{Canonical Lift}
The functions $\widetilde{P}_{i,j}^k\in\mathbb{C}[[x]]$ in modified flatness equations have canonical lifts to the the ring  $$\mathbb{F}=\mathbb{C}[L^{\pm 1}][A_1,DA_1,D^2A_1,A_2]$$ 
via equation (\ref{eq:DLoverL}), equation (\ref{eq:DA2_intermsof_A1DA1A2}) and the first four rows of the flatness equations (\ref{eq:modifiedflatness}) in the descending order. More precisely, we start with Lemma \ref{lem:P0jk_is_in_CLplusminus}, that is
\begin{equation*}
\widetilde{P}_{0,j}^k\in\mathbb{C}[L^{\pm 1}]\subset\mathbb{F}.
\end{equation*}
Then, by equation (\ref{eq:DLoverL}), we obtain
\begin{equation*}
\widetilde{P}_{4,j}^k=\widetilde{P}_{0,j}^k+\frac{1}{L}D\widetilde{P}_{0,j}^{k-1}\in\mathbb{C}[L^{\pm 1}]\subset\mathbb{F}.
\end{equation*}
By proceeding in a similar manner, we see that
\begin{equation*}
\begin{aligned}
\widetilde{P}_{3,j}^k&=\widetilde{P}_{4,j}^k+\frac{1}{L}D\widetilde{P}_{4,j}^{k-1}+A_1\widetilde{P}_{4,j}^{k-1}\in\mathbb{C}[L^{\pm 1}][A_1]\subset\mathbb{F},\\
\widetilde{P}_{2,j}^k&=\widetilde{P}_{3,j}^k+\frac{1}{L}D\widetilde{P}_{3,j}^{k-1}+A_2\widetilde{P}_{3,j}^{k-1}\in\mathbb{C}[L^{\pm 1}][A_1,DA_1,A_2]\subset\mathbb{F}.
\end{aligned}
\end{equation*}
Lastly, using equation (\ref{eq:DA2_intermsof_A1DA1A2}) we get
\begin{equation*}
\widetilde{P}_{1,j}^k=\widetilde{P}_{2,j}^k+\frac{1}{L}D\widetilde{P}_{2,j}^{k-1}-A_2\widetilde{P}_{2,j}^{k-1}\in\mathbb{C}[L^{\pm 1}][A_1,DA_1,D^2A_1,A_2]=\mathbb{F}.
\end{equation*}
This procedure gives us a canonical lift of $\widetilde{P}_{i,j}^k\in\mathbb{C}[[x]]$ to the free polynomial ring $\mathbb{F}$, which we denote again as $\widetilde{P}_{i,j}^k$. We state this result in the following way.

\begin{lem}\label{lem:lifting_lemma}
We have $\widetilde{P}_{i,j}^k\in\mathbb{F}$ for all $0\leq{i,j}\leq{4}$ and $k\geq{0}$.
\end{lem}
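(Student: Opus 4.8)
The plan is to prove Lemma \ref{lem:lifting_lemma} by a double induction that follows exactly the inductive structure already laid out in the paragraph preceding the statement, making the claimed membership rigorous at each step. The outer induction is on $k\geq 0$, and the inner induction descends through the index $i$ in the order $i=4,3,2,1$ (starting from the base case $i=0$ supplied by Lemma \ref{lem:P0jk_is_in_CLplusminus}). The key observation is that the ring $\mathbb{F}=\mathbb{C}[L^{\pm 1}][A_1,DA_1,D^2A_1,A_2]$ must be shown to be \emph{closed under the operator $D$}, because each of the first four modified flatness equations in (\ref{eq:modifiedflatness}) expresses $\widetilde{P}_{i,j}^k$ in terms of $\widetilde{P}_{i+1,j}^k$, the quantity $\frac{1}{L}D\widetilde{P}_{i+1,j}^{k-1}$ (which has lower $k$-index and hence is handled by the outer induction hypothesis), and a multiplication by $A_1$ or $A_2$. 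So the real content is establishing that $D$ maps $\mathbb{F}$ into itself.

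First I would verify that $\mathbb{F}$ is a differential ring under $D$. Since $D$ is a derivation, it suffices to check that $D$ sends each generator back into $\mathbb{F}$. For $L^{\pm 1}$ this is immediate from (\ref{eq:DLoverL}), which gives $DL=L(1-\tfrac{L^5}{5^5})\in\mathbb{F}$ and $D(L^{-1})=-L^{-1}(1-\tfrac{L^5}{5^5})\in\mathbb{F}$. For the generators $A_1$, $DA_1$, $D^2A_1$ the only nontrivial case is $D(D^2A_1)=D^3A_1$, which requires a relation expressing $D^3A_1$ as a polynomial in the four generators over $\mathbb{C}[L^{\pm 1}]$; this is exactly the kind of relation that the paragraph after (\ref{eq:DA2_intermsof_A1DA1A2}) asserts exists, coming from rewriting the Lho identities (particularly the $B_4$ relation) in terms of the $A_i$. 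For $A_2$, the crucial input is equation (\ref{eq:DA2_intermsof_A1DA1A2}), namely $DA_2=LA_1^2+LA_2^2-DA_1-15(1-\tfrac{L^5}{5^5})\tfrac{L^5}{5^5}$, which manifestly lies in $\mathbb{F}$. Thus $D(\mathbb{F})\subseteq\mathbb{F}$.

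With closure of $D$ in hand, the induction runs cleanly. For the base $k=0$, one checks directly from the $R$-matrix normalization $R(z)=\mathrm{Id}+O(z)$ that $\widetilde{P}_{i,j}^0$ reduces to constant-times-$L$-power data, placing it in $\mathbb{C}[L^{\pm 1}]\subseteq\mathbb{F}$; alternatively one observes that all terms $\frac{1}{L}D\widetilde{P}_{\bullet,j}^{-1}$ vanish (since $R_{-1}=0$), so (\ref{eq:modifiedflatness}) collapses to $\widetilde{P}_{i,j}^0=\widetilde{P}_{0,j}^0\in\mathbb{C}[L^{\pm 1}]$. For the inductive step, assume the result for all indices with second superscript less than $k$. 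Lemma \ref{lem:P0jk_is_in_CLplusminus} gives $\widetilde{P}_{0,j}^k\in\mathbb{C}[L^{\pm 1}]\subseteq\mathbb{F}$, and then the first four equations of (\ref{eq:modifiedflatness}), read top to bottom, successively express $\widetilde{P}_{4,j}^k,\widetilde{P}_{3,j}^k,\widetilde{P}_{2,j}^k,\widetilde{P}_{1,j}^k$ as $\mathbb{F}$-combinations of quantities already known to lie in $\mathbb{F}$: the same-$k$ term is in $\mathbb{F}$ by the inner induction, the $\frac{1}{L}D(\,\cdot\,)^{k-1}$ term is in $\mathbb{F}$ because the argument is in $\mathbb{F}$ by the outer hypothesis and $D(\mathbb{F})\subseteq\mathbb{F}$ with $L^{-1}\in\mathbb{F}$, and the $A_1$- or $A_2$-multiplied term is in $\mathbb{F}$ since $A_1,A_2\in\mathbb{F}$.

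The main obstacle I anticipate is precisely the verification that $D^3A_1\in\mathbb{F}$, i.e. that the generator $D^2A_1$ has its $D$-image back in the finitely generated ring $\mathbb{F}$ rather than requiring a new generator $D^3A_1$. This is where the specific Lho relations (the $B_4$ equation and (\ref{eq:DX2intermsofX1X2DX1})) must be translated, via the $\mathbb{C}[L^{\pm 1}]$-linear change of variables between $\{X_1,X_2\}$ and $\{A_1,A_2\}$, into a closed polynomial expression; the paper has already asserted this differential ring is a quotient of $\mathbb{F}$, so the relation exists, but exhibiting it explicitly (and confirming no higher $D$-derivatives of $A_1$ escape $\mathbb{F}$) is the only computationally delicate point. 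Everything else is a bookkeeping consequence of $D$ being a derivation on the differential ring $\mathbb{F}$ together with the triangular structure of (\ref{eq:modifiedflatness}).
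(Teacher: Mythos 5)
Your proof is correct in substance and follows the same skeleton as the paper's: Lemma \ref{lem:P0jk_is_in_CLplusminus} as the base, then the first four rows of (\ref{eq:modifiedflatness}) in descending order, with negative superscripts handled by the vanishing convention. The one genuine difference is where you locate the technical content. You reduce everything to the claim that $D$ maps $\mathbb{F}$ into itself, whose only delicate point is $D^3A_1\in\mathbb{F}$; that relation does hold (it is the translation of Lho's $B_4$ identity through the $\mathbb{C}[L^{\pm 1}]$-linear change of variables between $\{X_1,X_2\}$ and $\{A_1,A_2\}$, and is what the paper is invoking when it says the differential ring is a quotient of $\mathbb{F}$), so your argument closes. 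But the paper's argument never needs it: because the descending recursion restarts at $i=0$ via Lemma \ref{lem:P0jk_is_in_CLplusminus} rather than continuing through the fifth row of (\ref{eq:modifiedflatness}), the operator $D$ is only ever applied to entries $\widetilde{P}_{i,j}^{k-1}$ with $i\in\{0,4,3,2\}$, and these lie in the strictly smaller subrings $\mathbb{C}[L^{\pm 1}]$, $\mathbb{C}[L^{\pm 1}]$, $\mathbb{C}[L^{\pm 1}][A_1]$, $\mathbb{C}[L^{\pm 1}][A_1,DA_1,A_2]$ respectively, so the only rewritings needed are (\ref{eq:DLoverL}) and (\ref{eq:DA2_intermsof_A1DA1A2}), and $D$ is never applied to anything containing $D^2A_1$. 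This refinement is worth keeping explicit for two reasons: it makes the canonical lift unambiguous (no choice of a $D^3A_1$-relation enters the definition of the lift to the free polynomial ring), and it is precisely the degree bookkeeping ($\widetilde{P}_{3,j}^k$ has $A_2$-degree zero; only $\widetilde{P}_{1,j}^k$ involves $D^2A_1$, and then linearly) that Lemmas \ref{lem:derivativeof_Pijk_wrtA2} and \ref{lem:derivativeof_Pijk_wrtDA1} rely on. Your coarser induction proves the stated membership but discards that finer information, which you would then have to re-derive for the later lemmas.
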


\subsection{Preparations}
In this subsection, we use the lift $\widetilde{P}_{i,j}^k\in\mathbb{F}$ and prove two lemmas which will be used for the proof of holomorphic anomaly equations.
\begin{lem}\label{lem:derivativeof_Pijk_wrtA2}
The following identity holds
\begin{equation*}
\frac{\partial\widetilde{P}_{i,j}^k}{\partial{A_2}}=\delta_{i,2}\widetilde{P}_{{3},j}^{k-1}.
\end{equation*}
\end{lem}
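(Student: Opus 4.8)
The plan is to verify the identity directly from the canonical lift, organized as a case analysis on $i$ that follows the recursive construction order $i=0,4,3,2,1$ of (\ref{eq:modifiedflatness}). Throughout I regard $\frac{\partial}{\partial A_2}$ as the formal partial derivative on the free polynomial ring $\mathbb{F}=\mathbb{C}[L^{\pm 1}][A_1,DA_1,D^2A_1,A_2]$, and $D$ as the derivation specified on generators by (\ref{eq:DLoverL}), by $D(A_1)=DA_1$ and $D(DA_1)=D^2A_1$, and by (\ref{eq:DA2_intermsof_A1DA1A2}) for $D(A_2)$; these are exactly the rules used to produce the lift, and every $D$ appearing below acts only on elements of the subring $\mathbb{C}[L^{\pm 1}][A_1,DA_1,A_2]$, so the undefined quantity $D^3A_1$ is never encountered.

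The cases $i\in\{0,4,3\}$ are immediate: by the membership statements established in the construction of the lift one has $\widetilde{P}_{0,j}^k,\widetilde{P}_{4,j}^k\in\mathbb{C}[L^{\pm 1}]$ and $\widetilde{P}_{3,j}^k\in\mathbb{C}[L^{\pm 1}][A_1]$, none of which involve $A_2$, so the left-hand side vanishes, matching $\delta_{i,2}=0$. For $i=2$ I would differentiate the defining relation $\widetilde{P}_{2,j}^k=\widetilde{P}_{3,j}^k+\frac{1}{L}D\widetilde{P}_{3,j}^{k-1}+A_2\widetilde{P}_{3,j}^{k-1}$: since $\widetilde{P}_{3,j}^{k}$ and $\widetilde{P}_{3,j}^{k-1}$ lie in $\mathbb{C}[L^{\pm 1}][A_1]$ and $D$ carries this ring into $\mathbb{C}[L^{\pm 1}][A_1,DA_1]$, the only surviving contribution is $\frac{\partial}{\partial A_2}\bigl(A_2\widetilde{P}_{3,j}^{k-1}\bigr)=\widetilde{P}_{3,j}^{k-1}$, which is precisely the claim, valid for all $k$ under the convention $\widetilde{P}_{\bullet,j}^{m}=0$ for $m<0$.

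The genuinely nontrivial case is $i=1$, and the main obstacle is that $\frac{\partial}{\partial A_2}$ and $D$ do \emph{not} commute on $\mathbb{F}$. The key preparatory step is to compute their commutator on the subring $\mathbb{C}[L^{\pm 1}][A_1,DA_1,A_2]$ where $\widetilde{P}_{2,j}^{k-1}$ lives. Since a commutator of derivations is again a derivation, it is determined by its values on generators; using (\ref{eq:DLoverL}) and (\ref{eq:DA2_intermsof_A1DA1A2}) one finds that it annihilates $L$, $A_1$, $DA_1$ and sends $A_2\mapsto 2LA_2$, whence
\begin{equation*}
\frac{\partial}{\partial A_2}\circ D=D\circ\frac{\partial}{\partial A_2}+2LA_2\,\frac{\partial}{\partial A_2}
\end{equation*}
as maps out of that subring.

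Finally I would apply $\frac{\partial}{\partial A_2}$ to $\widetilde{P}_{1,j}^k=\widetilde{P}_{2,j}^k+\frac{1}{L}D\widetilde{P}_{2,j}^{k-1}-A_2\widetilde{P}_{2,j}^{k-1}$, substituting the $i=2$ result $\frac{\partial}{\partial A_2}\widetilde{P}_{2,j}^m=\widetilde{P}_{3,j}^{m-1}$ at the levels $m=k$ and $m=k-1$ and using the commutator identity to move $\frac{\partial}{\partial A_2}$ past $D$ in the middle term. The extra $2A_2\widetilde{P}_{3,j}^{k-2}$ contribution coming from the commutator partially cancels the $-A_2\widetilde{P}_{3,j}^{k-2}$ produced by differentiating $-A_2\widetilde{P}_{2,j}^{k-1}$, leaving $\widetilde{P}_{3,j}^{k-1}+\frac{1}{L}D\widetilde{P}_{3,j}^{k-2}+A_2\widetilde{P}_{3,j}^{k-2}-\widetilde{P}_{2,j}^{k-1}$; recognizing the first three terms as $\widetilde{P}_{2,j}^{k-1}$ via its $i=2$ defining relation gives $\frac{\partial}{\partial A_2}\widetilde{P}_{1,j}^k=0$, consistent with $\delta_{1,2}=0$. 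I expect the only delicate point to be the correct bookkeeping of the commutator term in this last step; everything else is forced by the membership facts already recorded in the construction of the lift, and the boundary convention makes the low-$k$ values automatic.
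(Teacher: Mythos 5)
Your proposal is correct and follows essentially the same route as the paper: the cases $i\in\{0,3,4\}$ are dispatched by the membership facts from the lifting construction, the $i=2$ case by differentiating the defining relation, and the $i=1$ case by tracking how $\partial/\partial A_2$ interacts with $D$ via $\partial(DA_2)/\partial A_2=2LA_2$ and then recognizing the surviving terms as $\widetilde{P}_{2,j}^{k-1}$. The only cosmetic difference is that you package the key step as the commutator identity $\partial_{A_2}\circ D=D\circ\partial_{A_2}+2LA_2\,\partial_{A_2}$ on the subring $\mathbb{C}[L^{\pm 1}][A_1,DA_1,A_2]$, while the paper performs the equivalent chain-rule computation directly on $D\widetilde{P}_{2,j}^{k-1}$.
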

\begin{proof}
It is clear that the degrees of $\widetilde{P}_{0,j}^k$, $\widetilde{P}_{4,j}^k$ and $\widetilde{P}_{3,j}^k$ in $A_2$ are all zero. Hence, we get
\begin{equation}
\frac{\partial\widetilde{P}_{0,j}^k}{\partial{A_2}}=\frac{\partial\widetilde{P}_{4,j}^k}{\partial{A_2}}=\frac{\partial\widetilde{P}_{3,j}^k}{\partial{A_2}}=0.
\end{equation}
The place where we see $A_2$ for the first time are the next two equations in (\ref{eq:modifiedflatness}),
\begin{align}
\widetilde{P}_{2,j}^k=&\widetilde{P}_{3,j}^k+\frac{1}{L}D\widetilde{P}_{3,j}^{k-1}+A_2\widetilde{P}_{3,j}^{k-1},\label{eq:A2middleequations1}\\
\widetilde{P}_{1,j}^k=&\widetilde{P}_{2,j}^k+\frac{1}{L}D\widetilde{P}_{2,j}^{k-1}-A_2\widetilde{P}_{2,j}^{k-1},\label{eq:A2middleequations2}
\end{align}
From the first equation (\ref{eq:A2middleequations1}), we see that
\begin{equation*}\label{eq:A2derivative_of_P2jk}
\frac{\partial\widetilde{P}_{2,j}^k}{\partial{A_2}}=\widetilde{P}_{3,j}^{k-1}.
\end{equation*}
Note that equation (\ref{eq:DA2_intermsof_A1DA1A2}) gives
\begin{equation*}\label{eq:A2derivative_of_DA2}
\frac{\partial \left(DA_2\right)}{\partial A_2}=2LA_2.
\end{equation*}
Now, we compute the last derivative. By flatness equations (\ref{eq:modifiedflatness}) and equation (\ref{eq:A2derivative_of_DA2}), we obtain
\begin{equation}
\begin{aligned}
\frac{\partial \widetilde{P}_{1,j}^k}{\partial A_2}=&\frac{\partial \widetilde{P}_{2,j}^k}{\partial A_2}+\frac{1}{L}\frac{\partial\left(D\widetilde{P}_{2,j}^{k-1}\right)}{\partial A_2}-\widetilde{P}_{2,j}^{k-1}-A_2\frac{\partial \widetilde{P}_{2,j}^{k-1}}{\partial A_2}\\
=&\frac{\partial \widetilde{P}_{2,j}^k}{\partial A_2}+\frac{1}{L}\left(2LA_2\widetilde{P}_{3,j}^{k-2}+D\widetilde{P}_{3,j}^{k-2}\right)-\widetilde{P}_{2,j}^{k-1}-A_2\frac{\partial \widetilde{P}_{2,j}^{k-1}}{\partial A_2}.
\end{aligned}
\end{equation}
Then, by equation (\ref{eq:A2derivative_of_P2jk}) and again by flatness equations (\ref{eq:modifiedflatness}), we get
\begin{equation}
\begin{aligned}
\frac{\partial \widetilde{P}_{1,j}^k}{\partial A_2}
=&\widetilde{P}_{3,j}^{k-1}+2A_2\widetilde{P}_{3,j}^{k-2}+\frac{1}{L}D\widetilde{P}_{3,j}^{k-2}-\widetilde{P}_{2,j}^{k-1}-A_2\widetilde{P}_{3,j}^{k-2}\\
=&\widetilde{P}_{3,j}^{k-1}+A_2\widetilde{P}_{3,j}^{k-2}+\frac{1}{L}D\widetilde{P}_{3,j}^{k-2}-\widetilde{P}_{2,j}^{k-1}=0.
\end{aligned}
\end{equation}
This completes the proof.
\end{proof}
 
\begin{lem}\label{lem:derivativeof_Pijk_wrtDA1}
The following identity holds
\begin{equation*}
\frac{\partial\widetilde{P}_{i,j}^k}{\partial{(D^2A_1)}}=\delta_{i,1}\frac{1}{L^2}\widetilde{P}_{{4},j}^{k-3}.
\end{equation*}
\end{lem}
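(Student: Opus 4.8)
The plan is to follow the same strategy as in the proof of Lemma~\ref{lem:derivativeof_Pijk_wrtA2}, exploiting the way the canonical lift distributes the generators of $\mathbb{F}=\mathbb{C}[L^{\pm1}][A_1,DA_1,D^2A_1,A_2]$ among the $\widetilde{P}_{i,j}^k$. Recall from the construction of the lift that $\widetilde{P}_{0,j}^k,\widetilde{P}_{4,j}^k\in\mathbb{C}[L^{\pm1}]$, that $\widetilde{P}_{3,j}^k\in\mathbb{C}[L^{\pm1}][A_1]$, and that $\widetilde{P}_{2,j}^k\in\mathbb{C}[L^{\pm1}][A_1,DA_1,A_2]$; only $\widetilde{P}_{1,j}^k$ can involve the generator $D^2A_1$. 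Since none of the four subrings above contains $D^2A_1$, the partial derivative $\partial/\partial(D^2A_1)$ annihilates $\widetilde{P}_{i,j}^k$ for $i\in\{0,2,3,4\}$, which accounts for the factor $\delta_{i,1}$ at once. It remains to treat $i=1$.

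The key technical device is a pair of commutation rules between the formal partial derivatives and the operator $D$. Writing the action of $D$ on a general $f\in\mathbb{F}$ as
\[
Df=(DL)f_L+(DA_1)f_{A_1}+(D^2A_1)f_{DA_1}+(D^3A_1)f_{D^2A_1}+(DA_2)f_{A_2},
\]
where the subscripts denote formal partial derivatives, and using that $DL\in\mathbb{C}[L^{\pm1}]$ together with the explicit relations $D(A_1)=DA_1$, $D(DA_1)=D^2A_1$ and \eqref{eq:DA2_intermsof_A1DA1A2} for $DA_2$, I would verify
\[
\frac{\partial(Df)}{\partial(D^2A_1)}=\frac{\partial f}{\partial(DA_1)}\quad\text{whenever $f$ is free of $D^2A_1$,}
\]
and
\[
\frac{\partial(Dg)}{\partial(DA_1)}=\frac{\partial g}{\partial A_1}\quad\text{for $g\in\mathbb{C}[L^{\pm1}][A_1]$.}
\]
In each case the point is that the differentiated generator appears only through the explicit coefficient $D(DA_1)=D^2A_1$ (respectively $D(A_1)=DA_1$), while the remaining coefficients $DL$, $DA_1$, $DA_2$, $D^3A_1$ and the partials of $f$ (resp.\ $g$) carry none of it.

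With these rules the computation for $i=1$ telescopes. Starting from the recursion $\widetilde{P}_{1,j}^k=\widetilde{P}_{2,j}^k+\tfrac1L D\widetilde{P}_{2,j}^{k-1}-A_2\widetilde{P}_{2,j}^{k-1}$ and discarding the two terms free of $D^2A_1$, the first rule gives $\partial\widetilde{P}_{1,j}^k/\partial(D^2A_1)=\tfrac1L\,\partial\widetilde{P}_{2,j}^{k-1}/\partial(DA_1)$. Feeding in the recursion for $\widetilde{P}_{2,j}^{k-1}$ and the second rule (legitimate because the function differentiated after $D$ is $\widetilde{P}_{3,j}^{k-2}\in\mathbb{C}[L^{\pm1}][A_1]$) reduces this to $\tfrac1{L^2}\,\partial\widetilde{P}_{3,j}^{k-2}/\partial A_1$. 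Finally, in $\widetilde{P}_{3,j}^{k-2}=\widetilde{P}_{4,j}^{k-2}+\tfrac1L D\widetilde{P}_{4,j}^{k-3}+A_1\widetilde{P}_{4,j}^{k-3}$ the only $A_1$-dependence sits in the last term, whose factor $\widetilde{P}_{4,j}^{k-3}$ is a pure $L$-function, so $\partial\widetilde{P}_{3,j}^{k-2}/\partial A_1=\widetilde{P}_{4,j}^{k-3}$. Collecting the two factors of $1/L$ yields $\partial\widetilde{P}_{1,j}^k/\partial(D^2A_1)=\tfrac1{L^2}\widetilde{P}_{4,j}^{k-3}$, as claimed.

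The step I expect to require the most care is the verification of the commutation rules, and specifically the bookkeeping of where each generator can re-enter after applying $D$. The genuine hazard is the term $-DA_1$ hidden inside $DA_2$ via \eqref{eq:DA2_intermsof_A1DA1A2}: this would spoil the second rule for functions depending on $A_2$. Fortunately it is never triggered, since the second rule is only applied to $\widetilde{P}_{3,j}^{k-2}$, which carries no $A_2$; but I would flag this explicitly, as it is the one place where the clean telescoping could fail if an intermediate function sat one level lower in the filtration.
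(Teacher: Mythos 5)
Your proposal is correct and follows essentially the same route as the paper: the containments $\widetilde{P}_{0,j}^k,\widetilde{P}_{4,j}^k\in\mathbb{C}[L^{\pm1}]$, $\widetilde{P}_{3,j}^k\in\mathbb{C}[L^{\pm1}][A_1]$, $\widetilde{P}_{2,j}^k\in\mathbb{C}[L^{\pm1}][A_1,DA_1,A_2]$ kill all cases except $i=1$, and the coefficient of $D^2A_1$ is then traced back through the lifting recursion to the coefficient $\widetilde{P}_{4,j}^{k-3}$ of $A_1$ in $\widetilde{P}_{3,j}^{k-2}$, picking up a factor $\tfrac1L$ at each of the two steps. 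Your explicit commutation rules (and the observation that the $-DA_1$ term inside $DA_2$ is never triggered because $\widetilde{P}_{3,j}^{k-2}$ is free of $A_2$) simply make rigorous what the paper compresses into ``keeping track of this term in the procedure of canonical lifting.''
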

\begin{proof}
It is clear that the only $\widetilde{P}_{i,j}^k\in\mathbb{F}$ that has non-zero degree in $D^2A_1$ is $\widetilde{P}_{1,j}^k$, and the degree of $\widetilde{P}_{1,j}^k$ in $D^2A_1$ is $1$. So, we obtain
\begin{equation}
\frac{\partial\widetilde{P}_{0,j}^k}{\partial{(D^2A_1)}}=\frac{\partial\widetilde{P}_{4,j}^k}{\partial{(D^2A_1)}}=\frac{\partial\widetilde{P}_{3,j}^k}{\partial{(D^2A_1)}}=\frac{\partial\widetilde{P}_{2,j}^k}{\partial{(D^2A_1)}}=0.
\end{equation}
The coefficient of $D^2A_1$ in $\widetilde{P}_{1,j}^k$ descends from the coefficient of $A_1$ in $\widetilde{P}_{3,j}^{k-2}$, which is $\widetilde{P}_{4,j}^{k-3}$. Keeping track of this term in the procedure of canonical lifting, we see that the coefficient of $D^2A_1$ in $\widetilde{P}_{1,j}^k$ is
\begin{equation*}
\frac{1}{L^2}\widetilde{P}_{{4},j}^{k-3}.
\end{equation*}
This completes the proof.
\end{proof}


\section{Holomorphic anomaly equations}\label{sec:HAE}
\subsection{Formula for potentials}\label{subsec:CohFT_formula}
By general considerations, Gromov-Witten theory of $[\mathbb{C}^5/\mathbb{Z}_5]$ has the structure of a cohomological field theory (CohFT). We refer to \cite{km} and \cite{Picm} for discussions on CohFTs. 

\subsubsection{Graphs}
We describe the graphs needed in the formula for Gromov-Witten potentials.

Recall that a \textit{stable graph} $\Gamma$ is a tuple
\begin{equation*}
\Gamma=\left(\mathrm{V}_{\Gamma}, \mathrm{H}_{\Gamma}, \mathrm{L}_{\Gamma}, \mathrm{g}: \mathrm{V}_{\Gamma} \rightarrow \mathbb{Z}_{\geq 0}, \nu: \mathrm{H}_{\Gamma}\cup\mathrm{L}_{\Gamma} \rightarrow \mathrm{V}_{\Gamma}, \iota: \mathrm{H}_{\Gamma} \rightarrow \mathrm{H}_{\Gamma}, \ell:\mathrm{L}_{\Gamma}\rightarrow\{1,\ldots,n\} \right)
\end{equation*}
satisfying:
\begin{enumerate}
\item $\mathrm{V}_{\Gamma}$ is the vertex set with a genus assignment $\mathrm{g}:\mathrm{V}_{\Gamma}\rightarrow\mathbb{Z}_{\geq 0}$,
    
\item $\mathrm{H}_{\Gamma}$ is the half-edge set equipped with an involution $\iota: \mathrm{H}_{\Gamma} \rightarrow \mathrm{H}_{\Gamma}$,
    
\item $\mathrm{E}_{\Gamma}$ is the edge set defined by the orbits of $\iota: \mathrm{H}_{\Gamma} \rightarrow \mathrm{H}_{\Gamma}$ 
in $\mathrm{H}_{\Gamma}$ (self-edges are allowed at the vertices) and the tuple $\left(\mathrm{V}_{\Gamma},\mathrm{E}_{\Gamma}\right)$ defines a connected graph,
    
\item $\mathrm{L}_{\Gamma}$ is the set of legs, the subset of $\mathrm{H}_{\Gamma}$ fixed by the involution $\iota: \mathrm{H}_{\Gamma} \rightarrow \mathrm{H}_{\Gamma}$ and the map $\ell:\mathrm{L}_{\Gamma}\rightarrow\{1,\ldots,m\}$ is an isomorphism labeling legs,

\item The map $\nu: \mathrm{H}_{\Gamma}\cup\mathrm{L}_{\Gamma} \rightarrow \mathrm{V}_{\Gamma}$ is a vertex assignment,
    
\item For each vertex $v$, let $\mathrm{n}(\mathfrak{v})=\mathrm{l}(\mathfrak{v})+\mathrm{h}(\mathfrak{v})$ be the valence of the vertex (where $\mathrm{l}(\mathfrak{v})$ and $\mathrm{h}(\mathfrak{v})$ are the number of legs and the number of edges attached to the vertex $\mathfrak{v}$ respectively). Then, the following stability condition holds:
\begin{equation*}
2\mathrm{g}(\mathfrak{v})-2+\mathrm{n}(\mathfrak{v})>0.
\end{equation*}
\end{enumerate}

The \textit{genus} of a stable graph $\Gamma$ is defined by
\begin{equation*}
\mathrm{g}(\Gamma)=h^1(\Gamma)+\sum_{\mathfrak{v}\in\mathrm{V}_{\Gamma}}\mathrm{g}(\mathfrak{v}).
\end{equation*}
Let $\mathrm{G}_{g,m}$ be the isomorphism classes of stable graphs of genus $g$ with $n$ legs. A \textit{decorated stable graph} $$\Gamma\in\mathrm{G}_{g,n}^{\text{Dec}}(5)$$ of order $5$ is a stable graph $\Gamma\in\mathrm{G}_{g,n}$ with an extra assignment $\mathrm{p}: \mathrm{V}_{\Gamma}\rightarrow \{0,1, 2, 3,4\}$ to each vertex $\mathfrak{v}\in\mathrm{V}_{\Gamma}$. For a decorated stable graph $\Gamma\in\mathrm{G}_{g,n}^{\text{Dec}}(5)$ we denote its underlying stable graph by $$\Gamma^{\mathrm{St}}\in\mathrm{G}_{g,n}$$ after forgetting the decoration.

\subsubsection{Formula for $\mathcal{F}_g$}
By the results stated in Section \ref{subsec:frob}, the CohFT of Gromov-Witten theory of $[\mathbb{C}^5/\mathbb{Z}_5]$ is semisimple. By Givental-Teleman classification of semisimple CohFTs (see e.g. \cite{Picm} for a survey), we can write Gromov-Witten potential as a sum over decorated stable graphs,
\begin{equation}\label{eqn:formula_Fg}
\mathcal{F}_{g, n}^{\left[\mathbb{C}^{5} / \mathbb{Z}_{5}\right]}\left(\phi_{c_{1}}, \ldots, \phi_{c_{n}}\right)=\sum_{\Gamma\in\mathrm{G}_{g,m}^{\text{Dec}}(5)}\operatorname{Cont}_{\Gamma}\left(\phi_{c_{1}}, \ldots, \phi_{c_{n}}\right).
\end{equation}
Details about how this formula works in general can be found in e.g. \cite{ppz} and \cite{lho-p2}.

In order to state the contributions of graphs to $\mathcal{F}_{g, n}^{\left[\mathbb{C}^{5} / \mathbb{Z}_{5}\right]}\left(\phi_{c_{1}}, \ldots, \phi_{c_{n}}\right)$, we need to introduce the following series in $\mathbb{C}[[x]]$:
\begin{equation*}
K_0,\quad K_1=C_1,\quad K_2=C_1C_2,\quad K_3=C_1C_2C_3,\quad\text{and}\quad K_4=C_1C_2^2C_3,
\end{equation*}
and the following involution
$$\mathrm{Inv}:\{0,1,2,3,4\}\rightarrow\{0,1,2,3,4\},$$ with $\mathrm{Inv}(0)=0$ and $\mathrm{Inv}(i)=5-i$ for $1\leq i \leq 4$.
\begin{prop}\label{prop:contributions}
The contribution associated to a decorated stable graph $\Gamma\in\mathrm{G}_{g,n}^{\text{Dec}}(5)$ is
\begin{equation*}
\operatorname{Cont}_{\Gamma}\left(\phi_{c_{1}}, \ldots, \phi_{c_{n}}\right)=\frac{1}{|\mathrm{Aut}(\Gamma^{\mathrm{St}})|} \sum_{\mathrm{A} \in \mathbb{Z}_{\geq 0}^{\mathrm{F}(\Gamma)}} \prod_{\mathfrak{v} \in \mathrm{V}_{\Gamma}} \mathrm{Cont}_{\Gamma}^{\mathrm{A}}(\mathfrak{v}) \prod_{\mathfrak{e}\in \mathrm{E}_{\Gamma}} \mathrm{Cont}_{\Gamma}^{\mathrm{A}}(\mathfrak{e}) \prod_{\mathfrak{l} \in \mathrm{L}_{\Gamma}} \mathrm{Cont}_{\Gamma}^{\mathrm{A}}(\mathfrak{l})
\end{equation*}
where $\mathrm{F}(\Gamma)=\left\vert\mathrm{H}_{\Gamma}\right\vert$. Here, $\mathrm{Cont}_{\Gamma}^{\mathrm{A}}(\mathfrak{v})$, $\mathrm{Cont}_{\Gamma}^{\mathrm{A}}(\mathfrak{e})$, and $\mathrm{Cont}_{\Gamma}^{\mathrm{A}}(\mathfrak{l})$ are the {\em vertex}, {\em edge} and {\em leg} contributions with flag $\mathrm{A}-$values\footnote{Notation: The values ${b_{\mathfrak{v}1}},\ldots,{b_{\mathfrak{v}\mathrm{h}(\mathfrak{v})}}$ and $b_{\mathfrak{e}1},b_{\mathfrak{e}2}$ are the entries of $(a_1,\ldots,a_m,b_{m+1},\ldots,b_{\left\vert\mathrm{H}_{\Gamma}\right\vert})$ corresponding to $\mathrm{Cont}_{\Gamma}^{\mathrm{A}}(\mathfrak{v})$ and $\mathrm{Cont}_{\Gamma}^{\mathrm{A}}(\mathfrak{e})$ respectively.} $(a_1,\ldots,a_m,b_{m+1},\ldots,b_{\left\vert\mathrm{H}_{\Gamma}\right\vert})$ respectively, and they are given by
\begin{equation*}
\begin{aligned}
    \mathrm{Cont}_{\Gamma}^{\mathrm{A}}(\mathfrak{v})
    =&\sum_{k \geq 0} \frac{g({e}_{\mathrm{p}(\mathfrak{v})},{e}_{\mathrm{p}(\mathfrak{v})})^{-\frac{2\mathrm{g}(\mathfrak{v})-2+\mathrm{n}(\mathfrak{v})+k}{2}}}{k !}\\
    &\times\int_{\overline{M}_{\mathrm{g}(\mathfrak{v}),\mathrm{n}(\mathfrak{v})+k}}\psi_1^{a_{\mathfrak{v}1}}\cdots\psi_{\mathrm{l}(v)}^{a_{\mathfrak{v}\mathrm{l}(\mathfrak{v})}}\psi_{\mathrm{l}(\mathfrak{v})+1}^{b_{\mathfrak{v}1}}\cdots\psi_{\mathrm{n}(\mathfrak{v})}^{b_{\mathfrak{v}\mathrm{h}(\mathfrak{v})}}t_{\mathrm{p}(\mathfrak{v})}(\psi_{\mathrm{n}(\mathfrak{v})+1})\cdots t_{\mathrm{p}(\mathfrak{v})}(\psi_{\mathrm{n}(\mathfrak{v})+k}),\\
    \mathrm{Cont}_{\Gamma}^{\mathrm{A}}(\mathfrak{e})
    =&\frac{(-1)^{b_{\mathfrak{e}1}+b_{\mathfrak{e}2}}}{5} \sum_{m=0}^{b_{\mathfrak{e}2}}(-1)^{m} \sum_{r=0}^{4}\frac{\widetilde{P}_{\mathrm{Inv}(r),\mathrm{p}(\mathfrak{v}_1)}^{b_{\mathfrak{e}1}+m+1}\widetilde{P}_{r,\mathrm{p}(\mathfrak{v}_2)}^{b_{\mathfrak{e}2}-m}}{\zeta^{(b_{\mathfrak{e}1}+m+1+\mathrm{Inv}(r))\mathrm{p}(\mathfrak{v}_1)}\zeta^{(b_{\mathfrak{e}2}-m+r)\mathrm{p}(\mathfrak{v}_2)}},\\
    \mathrm{Cont}_{\Gamma}^{\mathrm{A}}(\mathfrak{l})
    =&\frac{(-1)^{a_{\ell(\mathfrak{l})}}}{5}\frac{K_{\mathrm{Inv}(c_{\ell(\mathfrak{l})})}}{L^{\mathrm{Inv}(c_{\ell(\mathfrak{l})})}}
    \frac{\widetilde{P}_{\mathrm{Inv}(c_{\ell(\mathfrak{l})}),\mathrm{p}(\nu(\mathfrak{l}))}^{{a_{\ell(\mathfrak{l})}}}}{     \zeta^{({a_{\ell(\mathfrak{l})}}+{\mathrm{Inv}(c_{\ell(\mathfrak{l})})})\mathrm{p}(\nu(\mathfrak{l}))}},
\end{aligned}
\end{equation*}
where
\begin{equation*}
t_{\mathrm{p}(\mathfrak{v})}(z)=\sum_{j\geq{2}}\mathrm{T}_{\mathrm{p}(\mathfrak{v})j}z^j\quad\text{with}\quad \mathrm{T}_{\mathrm{p}(\mathfrak{v})j}=\frac{(-1)^j}{n}\widetilde{P}_{0,\mathrm{p}(\mathfrak{v})}^{j-1}\zeta^{-(j-1)\mathrm{p}(\mathfrak{v})}.
\end{equation*}
\end{prop}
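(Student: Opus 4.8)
The plan is to obtain the three contribution formulas as a specialization of the Givental--Teleman reconstruction of a semisimple CohFT (see \cite{g3}, \cite{t}, and the survey \cite{Picm}) to the Frobenius structure of $[\mathbb{C}^5/\mathbb{Z}_5]$ worked out in Section \ref{subsec:frob}, following the model of \cite{ppz} and \cite{lho-p2}. Recall that in the general formula the class attached to a decorated graph is assembled from a vertex factor carrying the topological part of the CohFT together with a translation inserted at extra marked points, an edge factor given by a two-variable propagator built from the $R$-matrix, and a leg factor given by the $R$-matrix acting on the insertion. Since all of $G$, $\Psi$, $DU$, and the entries $P_{i,j}^k$ of $\Psi^{-1}R_k$ have been made explicit, the proof amounts to rewriting each of these three building blocks in the normalized idempotent frame $\{\widetilde e_i=5e_i\}$ and matching it, after the twist $\widetilde P_{i,j}^k=\frac{L^i}{K_i}P_{i,j}^k\zeta^{(k+i)j}$, with the stated expression. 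I would treat the three pieces in turn.

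First, the vertex. For a semisimple CohFT the topological part assigns to a genus-$\mathrm{g}(\mathfrak v)$, valence-$\mathrm{n}(\mathfrak v)$ vertex lying in the component $\mathrm p(\mathfrak v)$ the scalar $g(e_{\mathrm p(\mathfrak v)},e_{\mathrm p(\mathfrak v)})^{-(2\mathrm g(\mathfrak v)-2+\mathrm n(\mathfrak v)+k)/2}$, which is exactly the prefactor displayed; the $\psi$-powers on the legs and half-edges are the flag $\mathrm A$-values, and the $k$ extra points each carry the translation $t_{\mathrm p(\mathfrak v)}(\psi)$. The remaining task is to identify the $e_{\mathrm p(\mathfrak v)}$-component of the translation $T(z)=z(\mathbf 1-R(z)^{-1}\mathbf 1)$ with $t_{\mathrm p(\mathfrak v)}(z)=\sum_{k\geq 2}\mathrm T_{\mathrm p(\mathfrak v)k}z^k$. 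Writing $\mathbf 1=\phi_0$, passing to canonical coordinates through $\Psi$, and using the symplectic condition $R(z)R(-z)^*=\mathrm{Id}$ to replace $R(z)^{-1}$ by $R(-z)^*$ turns the translation into the $\phi_0$-column of $\Psi^{-1}R_k$, namely $P^k_{0,\mathrm p(\mathfrak v)}$, up to the sign $(-1)^k$ coming from the argument $-z$; applying the twist with $i=0$ (so that only the phase $\zeta^{-k\mathrm p(\mathfrak v)}$ survives) then yields $\mathrm T_{\mathrm p(\mathfrak v)k}=\frac{(-1)^k}{5}\widetilde P^k_{0,\mathrm p(\mathfrak v)}\zeta^{-k\mathrm p(\mathfrak v)}$. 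The $z^0$ and $z^1$ terms drop out because $R(z)=\mathrm{Id}+O(z)$ and $\mathbf 1$ is the flat unit, which explains the range $k\geq 2$.

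Finally, the leg and the edge, which form the heart of the computation. For a leg with insertion $\phi_c$ I would expand the $R$-matrix action on $\phi_c$, convert to the canonical frame through $\Psi$, and read off the relevant entry: the factor $\frac{K_{\mathrm{Inv}(c)}}{L^{\mathrm{Inv}(c)}}$ is precisely the corresponding entry of $\Psi$, whose columns are $1,\,L/C_1,\,L^2/(C_1C_2),\,C_1C_2/L^2,\,C_1/L$, i.e.\ $K_i/L^i$ after the $\mathrm{Inv}$-reindexing forced by the orbifold Poincar\'e duality $\phi^i=5\phi_{5-i}$, while $(-1)^{a}$ and the $\zeta$-phase are bookkeeping from the argument $-\psi$ and the definition of $\widetilde P$. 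For the edge, I would expand the propagator $\bigl(\mathrm{Id}-R(-z)^*R(-w)\bigr)/(z+w)$ in the canonical frame: the symplectic condition $R(z)R(-z)^*=\mathrm{Id}$ makes the numerator vanish on $z+w=0$ and lets me write the propagator as two $R$-matrix factors attached to the endpoints $\mathfrak v_1,\mathfrak v_2$, the expansion of $1/(z+w)$ produces the inner sum over $m$, the contraction over the intermediate canonical index produces the sum over $r$, and the pairing forces the $\mathrm{Inv}$ on one factor. I expect the main obstacle to lie exactly here, in keeping the signs and $\zeta$-phases consistent across the symplectic pairing and the $\mathrm{Inv}$-reindexing, and in verifying that the twist defining $\widetilde P$ is precisely what cancels all the $\Psi$- and $L$-factors so that the edge collapses to the clean double sum stated. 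Once the three pieces match, assembling them with $\frac{1}{|\mathrm{Aut}(\Gamma)|}$ and summing over the flag $\mathrm A$-values reproduces $\operatorname{Cont}_\Gamma$.
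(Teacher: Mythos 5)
Your proposal follows essentially the same route as the paper, which in fact offers no written proof of Proposition \ref{prop:contributions} at all: it simply asserts the vertex, edge, and leg contributions as the specialization of the Givental--Teleman graph sum (in the form of \cite{ppz}, \cite{lho-p2}) to the Frobenius data $G$, $\Psi$, $DU$, $R$ of Section \ref{subsec:frob}, which is precisely the specialization you outline, and your checkable identifications (e.g.\ the entries of $\Psi$ being $K_{\mathrm{Inv}(j)}/L^{\mathrm{Inv}(j)}$ via $C_1^2C_2^2C_3=L^5$, the range $k\geq 2$ in the translation, the expansion of the edge propagator) are correct. The sign and $\zeta$-phase bookkeeping you flag as the remaining obstacle is exactly what the twist $\widetilde{P}_{i,j}^k=\frac{L^i}{K_i}P_{i,j}^k\zeta^{(k+i)j}$ is designed to absorb, and the paper leaves that verification to the reader as well.
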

We should emphasize that Proposition \ref{prop:contributions} holds in $\mathbb{C}[[x]]$. Using Proposition \ref{prop:contributions} and lifting procedure in Section \ref{sec:lifts}, we obtain the following lift of Gromov-Witten potential to certain polynomial rings.

\begin{thm}[Finite generation property]\label{thm:vertex_edge_leg_Rings}
Let $\mathrm{Cont}_{\Gamma}^{\mathrm{A}}(\mathfrak{v})$, $\mathrm{Cont}_{\Gamma}^{\mathrm{A}}(\mathfrak{e})$, and $\mathrm{Cont}_{\Gamma}^{\mathrm{A}}(\mathfrak{l})$ be as in Proposition \ref{prop:contributions}. We have $\mathrm{Cont}_{\Gamma}^{\mathrm{A}}(\mathfrak{v})\in\mathbb{C}[L^{\pm 1}]$, $\mathrm{Cont}_{\Gamma}^{\mathrm{A}}(\mathfrak{e})\in\mathbb{F}$, and $\mathrm{Cont}_{\Gamma}^{\mathrm{A}}(\mathfrak{l})\in\mathbb{F}[C_1,C_2,C_3]$. Hence, we have
\begin{equation*}
\mathcal{F}_{g, n}^{\left[\mathbb{C}^{5} / \mathbb{Z}_{5}\right]}\left(\phi_{c_{1}}, \ldots, \phi_{c_{n}}\right)\in\mathbb{F}[C_1,C_2,C_3]
\end{equation*}
and when there is no insertions, we have
\begin{equation*}
\mathcal{F}_{g}^{\left[\mathbb{C}^{5} / \mathbb{Z}_{5}\right]}\in\mathbb{F}
\end{equation*}
where $\mathbb{F}=\mathbb{C}[L^{\pm 1}][A_1,DA_1,D^2A_1,A_2]$ as before.
\end{thm}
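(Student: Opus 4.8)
The plan is to verify the three membership claims separately, one for each type of flag contribution, and then assemble them. The key inputs are Lemma \ref{lem:lifting_lemma}, which tells us that every $\widetilde{P}_{i,j}^k$ lies in $\mathbb{F}$, together with Lemma \ref{lem:P0jk_is_in_CLplusminus}, which says that the $\widetilde{P}_{0,j}^k$ appearing in the Kontsevich--Witten descendent $t_{\mathrm{p}(\mathfrak{v})}(z)$ lie in the smaller ring $\mathbb{C}[L^{\pm 1}]$. The whole argument is then a matter of tracking which ring each factor of a contribution belongs to and observing that the ring operations (sums and products indexed by $k$, $m$, $r$) stay inside the stated rings.

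First I would treat the vertex contribution $\mathrm{Cont}_{\Gamma}^{\mathrm{A}}(\mathfrak{v})$. Here the only $x$-dependence enters through the coefficients $\mathrm{T}_{\mathrm{p}(\mathfrak{v})k}$ of $t_{\mathrm{p}(\mathfrak{v})}(z)$ and through the normalization factor $g(e_{\mathrm{p}(\mathfrak{v})},e_{\mathrm{p}(\mathfrak{v})})^{-\frac{2g-2+\mathrm{n}(\mathfrak{v})+k}{2}}$; the $\psi$-integrals over $\overline{M}_{\mathrm{g}(\mathfrak{v}),\mathrm{n}(\mathfrak{v})+k}$ are rational numbers. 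By definition $\mathrm{T}_{\mathrm{p}(\mathfrak{v})k}$ is a constant multiple of $\widetilde{P}_{0,\mathrm{p}(\mathfrak{v})}^k$, which lies in $\mathbb{C}[L^{\pm 1}]$ by Lemma \ref{lem:P0jk_is_in_CLplusminus}. The normalization constant $g(e_{\mathrm{p}(\mathfrak{v})},e_{\mathrm{p}(\mathfrak{v})})$ is, up to the constant prefactor recorded in $\Psi$, a monomial in $L$ (and in $C_1,C_2$, which combine into powers of $L$ by \eqref{eq:C12C22C3L5}); I would check that the exponent $-\tfrac{2g-2+\mathrm{n}(\mathfrak{v})+k}{2}$ still yields an element of $\mathbb{C}[L^{\pm 1}]$, using that the half-integer ambiguity is absorbed because $g(e_i,e_i)$ is a perfect square times a power of $L$. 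Since the sum over $k$ is finite for dimension reasons (the integral vanishes unless the $\psi$-degrees match $\dim\overline{M}_{\mathrm{g}(\mathfrak{v}),\mathrm{n}(\mathfrak{v})+k}$), the whole vertex factor lies in $\mathbb{C}[L^{\pm 1}]$.

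Next I would handle the edge contribution. Each summand is a product $\widetilde{P}_{\mathrm{Inv}(r),\mathrm{p}(\mathfrak{v}_1)}^{b_{\mathfrak{e}1}+m+1}\,\widetilde{P}_{r,\mathrm{p}(\mathfrak{v}_2)}^{b_{\mathfrak{e}2}-m}$ of two lifted $\widetilde{P}$'s, divided by roots-of-unity constants; by Lemma \ref{lem:lifting_lemma} each factor is in $\mathbb{F}$, and the finite sums over $m$ and $r$ keep us in $\mathbb{F}$, so $\mathrm{Cont}_{\Gamma}^{\mathrm{A}}(\mathfrak{e})\in\mathbb{F}$. For the leg contribution the same reasoning applies to the factor $\widetilde{P}_{\mathrm{Inv}(c_{\ell(\mathfrak{l})}),\mathrm{p}(\nu(\mathfrak{l}))}^{a_{\ell(\mathfrak{l})}}\in\mathbb{F}$, but there is an additional factor $K_{\mathrm{Inv}(c_{\ell(\mathfrak{l})})}/L^{\mathrm{Inv}(c_{\ell(\mathfrak{l})})}$; since the $K_i$ are products of the $C_1,C_2,C_3$, this factor lies in $\mathbb{F}[C_1,C_2,C_3]$, which is why the leg contribution (and hence $\mathcal{F}_{g,n}$ with insertions) only lands in the larger ring $\mathbb{F}[C_1,C_2,C_3]$ rather than in $\mathbb{F}$. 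Finally, assembling \eqref{eqn:formula_Fg}: the potential is a finite sum over $\Gamma\in\mathrm{G}_{g,m}^{\mathrm{Dec}}(5)$, and for each $\Gamma$ the sum over $\mathrm{A}\in\mathbb{Z}_{\geq 0}^{\mathrm{F}(\Gamma)}$ is effectively finite by the dimension constraint on each vertex moduli integral, so products and sums of the above factors remain in $\mathbb{F}[C_1,C_2,C_3]$. When $n=0$ there are no legs, every contribution is a product of vertex factors (in $\mathbb{C}[L^{\pm1}]\subset\mathbb{F}$) and edge factors (in $\mathbb{F}$), giving $\mathcal{F}_g^{[\mathbb{C}^5/\mathbb{Z}_5]}\in\mathbb{F}$.

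The main obstacle I anticipate is the bookkeeping in the vertex factor: one must confirm that the normalization power $g(e_{\mathrm{p}(\mathfrak{v})},e_{\mathrm{p}(\mathfrak{v})})^{-\frac{2g-2+\mathrm{n}(\mathfrak{v})+k}{2}}$ genuinely lands in $\mathbb{C}[L^{\pm 1}]$ and introduces no $C_i$-dependence or spurious square roots. This requires reading off $g(e_i,e_i)$ from $\Psi$ and $G$, checking it is a power of $L$ times a unit constant (using \eqref{eq:C12C22C3L5} to eliminate the $C_i$), and verifying that the resulting exponent combination is always an integer once all vertex, edge, and leg normalizations across the whole graph are multiplied together. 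The remaining steps are routine membership checks in polynomial rings.
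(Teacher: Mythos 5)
Your proposal is correct and follows essentially the same route as the paper: Lemma \ref{lem:P0jk_is_in_CLplusminus} for the vertex factor, Lemma \ref{lem:lifting_lemma} together with the definition of the $K_i$ for the edge and leg factors, and then assembly over the (effectively finite) sums in the graph formula. The one point you flag as the main obstacle is in fact moot: since $\widetilde{e}_i=5e_i$ is the \emph{normalized} idempotent, $g(e_{\mathrm{p}(\mathfrak{v})},e_{\mathrm{p}(\mathfrak{v})})=\tfrac{1}{25}$ is a constant, so the normalization factor contributes only an integer power of $5$ and introduces no $L$- or $C_i$-dependence at all.
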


\begin{proof}
By Lemma \ref{lem:P0jk_is_in_CLplusminus}, we have $\mathrm{Cont}_{\Gamma}^{\mathrm{A}}(\mathfrak{v})\in\mathbb{C}[L^{\pm 1}]$ since its expression involves only $\widetilde{P}_{0,j}^k$'s. By Lemma \ref{lem:lifting_lemma}, and definitions of $K_i$'s, we see that $\mathrm{Cont}_{\Gamma}^{\mathrm{A}}(\mathfrak{e})\in\mathbb{F}$ and $\mathrm{Cont}_{\Gamma}^{\mathrm{A}}(\mathfrak{l})\in\mathbb{F}[C_1, C_2, C_3]$. Hence, results for Gromov-Witten potentials follow.
\end{proof}

Depending on the insertions, we can give a better description of the polynomial ring that contains  Gromov-Witten potentials. For example, by Proposition \ref{prop:contributions} we have
\begin{equation}
\mathcal{F}_{g, n}^{\left[\mathbb{C}^{5} / \mathbb{Z}_{5}\right]}\left(\phi_{c_{1}}, \ldots, \phi_{c_{1}}\right)\in\mathbb{F}[C_1^{-1}]=\mathbb{C}[L^{\pm 1}][A_1,DA_1,D^2A_1,A_2,C_1^{-1}]
\end{equation}
and the degree of $C_1^{-1}$ in $\mathcal{F}_{g, n}^{\left[\mathbb{C}^{5} / \mathbb{Z}_{5}\right]}\left(\phi_{c_{1}}, \ldots, \phi_{c_{1}}\right)$ is $n$. Then, we obtain the following result by Lemma \ref{lem:partialwrtmirrormap}.

\begin{cor}
For all, $k\geq{1}$, we have
\begin{equation*}
\frac{\partial^k\mathcal{F}_{g}^{\left[\mathbb{C}^{5} / \mathbb{Z}_{5}\right]}}{\partial T^k}\in\mathbb{F}[C_1^{-1}]=\mathbb{C}[L^{\pm 1}][A_1,DA_1,D^2A_1,A_2,C_1^{-1}]
\end{equation*}
and the degree of $C_1^{-1}$ in $\frac{\partial^k\mathcal{F}_{g}^{\left[\mathbb{C}^{5} / \mathbb{Z}_{5}\right]}}{\partial T^k}$ is $k$.
\end{cor}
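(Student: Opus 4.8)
The plan is to reduce the claim to a $C_1^{-1}$-bookkeeping for the potential with $k$ insertions of $\phi_1$, and then read off the $C_1$-dependence directly from the leg factors in the graph sum of Proposition~\ref{prop:contributions}.

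First I would remove the derivatives. Since $\mathcal{F}_g^{[\mathbb{C}^5/\mathbb{Z}_5]}=\mathcal{F}_{g,0}^{[\mathbb{C}^5/\mathbb{Z}_5]}$ has no insertions, applying Lemma~\ref{lem:partialwrtmirrormap} $k$ times yields
\[
\frac{\partial^k\mathcal{F}_g^{[\mathbb{C}^5/\mathbb{Z}_5]}}{\partial T^k}=\mathcal{F}_{g,k}^{[\mathbb{C}^5/\mathbb{Z}_5]}(\underbrace{\phi_1,\ldots,\phi_1}_{k\text{-many}}).
\]
Thus it is enough to prove that this potential lies in $\mathbb{F}[C_1^{-1}]$ and has $C_1^{-1}$-degree equal to $k$; this is precisely the statement displayed immediately before the Corollary, specialized to the case in which every insertion is $\phi_1$.

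Next I would expand $\mathcal{F}_{g,k}^{[\mathbb{C}^5/\mathbb{Z}_5]}(\phi_1,\ldots,\phi_1)$ using the graph sum~(\ref{eqn:formula_Fg}) together with the vertex, edge, and leg contributions of Proposition~\ref{prop:contributions}, tracking only the power of $C_1$. By Theorem~\ref{thm:vertex_edge_leg_Rings} the vertex contributions lie in $\mathbb{C}[L^{\pm1}]$ and the edge contributions lie in $\mathbb{F}$, so neither involves $C_1,C_2,C_3$; hence all $C_i$-dependence is carried by the legs. The crucial computation is the leg factor when $c_{\ell(\mathfrak{l})}=1$. Here $\mathrm{Inv}(1)=4$, so the relevant prefactor is $K_4/L^4=(C_1C_2^2C_3)/L^4$. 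Using the identity $C_1^2C_2^2C_3=L^5$ from~(\ref{eq:C12C22C3L5}) gives $C_1C_2^2C_3=L^5/C_1$, whence $K_4/L^4=L/C_1$. The surviving factor $\widetilde{P}^{a_{\ell(\mathfrak{l})}}_{4,\mathrm{p}(\nu(\mathfrak{l}))}$ lies in $\mathbb{F}$ by Lemma~\ref{lem:lifting_lemma}, and the remaining signs and powers of $\zeta$ are constants. Therefore each $\phi_1$-leg contributes an element of $\mathbb{F}$ times exactly one factor of $C_1^{-1}$, with the factors $C_2,C_3$ cancelling completely.

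Finally I would assemble these observations. Every decorated graph $\Gamma\in\mathrm{G}_{g,k}^{\text{Dec}}(5)$ has exactly $k$ legs, and neither the $\mathrm{A}$-summation nor the internal summation in the vertex contribution changes the power of $C_1$; consequently each $\mathrm{Cont}_\Gamma(\phi_1,\ldots,\phi_1)$ equals $C_1^{-k}$ times an element of $\mathbb{F}$. Summing over the finitely many graphs gives
\[
\frac{\partial^k\mathcal{F}_g^{[\mathbb{C}^5/\mathbb{Z}_5]}}{\partial T^k}=C_1^{-k}\,f,\qquad f\in\mathbb{F},
\]
which exhibits the answer in $\mathbb{F}[C_1^{-1}]$ and forces the $C_1^{-1}$-degree to be exactly $k$. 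The one point requiring care is this last assertion of exactness: since every graph produces the pure monomial $C_1^{-k}$, no lower powers of $C_1^{-1}$ can arise from a single graph, so there is no cancellation of the $C_1$-power across graphs and exactness reduces to the nonvanishing of $f$, which holds because the potential is nontrivial for $g\geq2$. I expect the main obstacle to be the clean algebraic cancellation $K_4/L^4=L/C_1$ — that is, recognizing that for the insertion $\phi_1$ (and not for a general $\phi_c$) the factors $C_2,C_3$ disappear via $C_1^2C_2^2C_3=L^5$, leaving precisely one power of $C_1^{-1}$ per marked point.
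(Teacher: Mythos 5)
Your proposal is correct and follows essentially the same route as the paper: reduce via Lemma~\ref{lem:partialwrtmirrormap} to the potential with $k$ insertions of $\phi_1$, then read off the $C_1$-dependence from the graph sum of Proposition~\ref{prop:contributions}, where only the legs carry $C_i$'s and each $\phi_1$-leg contributes $K_4/L^4=L/C_1$ by the relation $C_1^2C_2^2C_3=L^5$. You merely make explicit the computation the paper leaves implicit in the display preceding the corollary, plus a short (and reasonable) remark on why the $C_1^{-1}$-degree is exactly, not at most, $k$.
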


\subsection{Proof of holomorphic anomaly equations}\label{subsec:HAE_proof}

\begin{lem}\label{lem:A2_derivative_of_edge}
We have
\begin{equation*} 
    \frac{\partial}{\partial A_{2}}\mathrm{Cont}_{\Gamma}^{\mathrm{A}}(\mathfrak{e})
    =\frac{(-1)^{b_{\mathfrak{e}1}+b_{\mathfrak{e}2}}}{5} \frac{\widetilde{P}_{3,\mathrm{p}(\mathfrak{v}_1)}^{b_{\mathfrak{e}1}}\widetilde{P}_{3,\mathrm{p}(\mathfrak{v}_2)}^{b_{\mathfrak{e}2}}}{\zeta^{(b_{\mathfrak{e}1}+3)\mathrm{p}(\mathfrak{v}_1)}\zeta^{(b_{\mathfrak{e}2}+3)\mathrm{p}(\mathfrak{v}_2)}}.
\end{equation*}
\end{lem}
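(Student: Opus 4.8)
The plan is to differentiate $\mathrm{Cont}_{\Gamma}^{\mathrm{A}}(\mathfrak{e})$ term-by-term and let the sharp vanishing in Lemma \ref{lem:derivativeof_Pijk_wrtA2} collapse the inner sum, after which a telescoping cancellation leaves a single boundary term. Recall that Lemma \ref{lem:derivativeof_Pijk_wrtA2} gives $\partial_{A_2}\widetilde{P}_{i,j}^k=\delta_{i,2}\widetilde{P}_{3,j}^{k-1}$, so only the slot $i=2$ has nonzero $A_2$-derivative.

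First I would apply $\partial/\partial A_2$ inside the double sum over $m$ and $r$, using the Leibniz rule on the product $\widetilde{P}_{\mathrm{Inv}(r),\mathrm{p}(\mathfrak{v}_1)}^{b_{\mathfrak{e}1}+m+1}\widetilde{P}_{r,\mathrm{p}(\mathfrak{v}_2)}^{b_{\mathfrak{e}2}-m}$. The first factor survives differentiation only when $\mathrm{Inv}(r)=2$, i.e.\ $r=3$, and the second only when $r=2$. Thus the $r$-sum collapses to exactly two surviving values: the $r=3$ term, in which the first factor becomes $\widetilde{P}_{3,\mathrm{p}(\mathfrak{v}_1)}^{b_{\mathfrak{e}1}+m}$ while the second is unchanged at $\widetilde{P}_{3,\mathrm{p}(\mathfrak{v}_2)}^{b_{\mathfrak{e}2}-m}$; and the $r=2$ term, in which the first factor is unchanged at $\widetilde{P}_{3,\mathrm{p}(\mathfrak{v}_1)}^{b_{\mathfrak{e}1}+m+1}$ while the second becomes $\widetilde{P}_{3,\mathrm{p}(\mathfrak{v}_2)}^{b_{\mathfrak{e}2}-m-1}$.

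Next I would organize the two resulting $m$-sums against a common summand. Setting $S_m=(-1)^m\widetilde{P}_{3,\mathrm{p}(\mathfrak{v}_1)}^{b_{\mathfrak{e}1}+m}\widetilde{P}_{3,\mathrm{p}(\mathfrak{v}_2)}^{b_{\mathfrak{e}2}-m}\big/\big(\zeta^{(b_{\mathfrak{e}1}+m+3)\mathrm{p}(\mathfrak{v}_1)}\zeta^{(b_{\mathfrak{e}2}-m+3)\mathrm{p}(\mathfrak{v}_2)}\big)$, the $r=3$ family is exactly $\sum_{m=0}^{b_{\mathfrak{e}2}}S_m$ (here $\mathrm{Inv}(3)=2$ makes the first $\zeta$-exponent $b_{\mathfrak{e}1}+m+1+2$). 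For the $r=2$ family, where $\mathrm{Inv}(2)=3$, the $\zeta$-exponents are $b_{\mathfrak{e}1}+m+4$ and $b_{\mathfrak{e}2}-m+2$; substituting $m\mapsto m-1$ matches these to $S_m$ and turns the prefactor $(-1)^{m-1}$ into $-(-1)^m$, so the $r=2$ family equals $-\sum_{m=1}^{b_{\mathfrak{e}2}+1}S_m$.

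Finally, adding the two families telescopes: $\sum_{m=0}^{b_{\mathfrak{e}2}}S_m-\sum_{m=1}^{b_{\mathfrak{e}2}+1}S_m=S_0-S_{b_{\mathfrak{e}2}+1}$. The boundary term $S_{b_{\mathfrak{e}2}+1}$ carries the factor $\widetilde{P}_{3,\mathrm{p}(\mathfrak{v}_2)}^{-1}$, which is zero under the convention $\widetilde{P}_{i,j}^k=0$ for $k<0$; hence only $S_0$ survives, and multiplying by the overall prefactor $(-1)^{b_{\mathfrak{e}1}+b_{\mathfrak{e}2}}/5$ yields exactly the claimed right-hand side. The step I would check most carefully is the reindexing $m\mapsto m-1$: one must confirm that all four $\zeta$-exponents and both $\widetilde{P}$-superscripts of the $r=2$ family line up with $S_m$ after the shift, since a single off-by-one there would break the telescope. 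Everything else is routine.
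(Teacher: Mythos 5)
Your proposal is correct and follows essentially the same route as the paper's proof: apply Leibniz together with Lemma \ref{lem:derivativeof_Pijk_wrtA2} so that only $r=3$ and $r=2$ survive, reindex the second family by one, and let the telescoping cancellation leave the $m=0$ boundary term (the paper truncates the second sum at $m=b_{\mathfrak{e}2}-1$ before shifting, whereas you keep the vanishing $\widetilde{P}^{-1}$ term explicitly, which is the same thing). All exponents and superscripts in your reindexing check out against the paper's computation.
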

\begin{proof}
By Proposition \ref{prop:contributions} and Lemma \ref{lem:derivativeof_Pijk_wrtA2}, we obtain
\begin{equation*}
\begin{aligned}
    \frac{\partial}{\partial A_2}\mathrm{Cont}_{\Gamma}^{\mathrm{A}}(\mathfrak{e})
    =&\frac{(-1)^{b_{\mathfrak{e}1}+b_{\mathfrak{e}2}}}{5} \sum_{m=0}^{b_{\mathfrak{e}2}}(-1)^{m} \sum_{r=0}^{4}\frac{\frac{\partial}{\partial A_2}\left(\widetilde{P}_{\mathrm{Inv}(r),\mathrm{p}(\mathfrak{v}_1)}^{b_{\mathfrak{e}1}+m+1}\widetilde{P}_{r,\mathrm{p}(\mathfrak{v}_2)}^{b_{\mathfrak{e}2}-m}\right)}{\zeta^{(b_{\mathfrak{e}1}+m+1+\mathrm{Inv}(r))\mathrm{p}(\mathfrak{v}_1)}\zeta^{(b_{\mathfrak{e}2}-m+r)\mathrm{p}(\mathfrak{v}_2)}}\\
    =&\frac{(-1)^{b_{\mathfrak{e}1}+b_{\mathfrak{e}2}}}{5} \sum_{m=0}^{b_{\mathfrak{e}2}}(-1)^{m} \frac{\widetilde{P}_{3,\mathrm{p}(\mathfrak{v}_1)}^{b_{\mathfrak{e}1}+m}\widetilde{P}_{3,\mathrm{p}(\mathfrak{v}_2)}^{b_{\mathfrak{e}2}-m}}{\zeta^{(b_{\mathfrak{e}1}+m+3)\mathrm{p}(\mathfrak{v}_1)}\zeta^{(b_{\mathfrak{e}2}-m+3)\mathrm{p}(\mathfrak{v}_2)}}\\
    &+\frac{(-1)^{b_{\mathfrak{e}1}+b_{\mathfrak{e}2}}}{5} \sum_{m=0}^{b_{\mathfrak{e}2}}(-1)^{m}\frac{\widetilde{P}_{3,\mathrm{p}(\mathfrak{v}_1)}^{b_{\mathfrak{e}1}+m+1}\widetilde{P}_{3,\mathrm{p}(\mathfrak{v}_2)}^{b_{\mathfrak{e}2}-m-1}}{\zeta^{(b_{\mathfrak{e}1}+m+4)\mathrm{p}(\mathfrak{v}_1)}\zeta^{(b_{\mathfrak{e}2}-m+2)\mathrm{p}(\mathfrak{v}_2)}}.
\end{aligned}
\end{equation*}
Since $\widetilde{P}_{i,j}^k$ is defined to be $0$ for $k<0$, the second summation ends actually at $m=b_{\mathfrak{e}2}-1$. Then, by shifting the second summmation by $1$, and cancelling out terms in total expression, we get
\begin{equation*}
\begin{aligned}
    \frac{\partial}{\partial A_2}\mathrm{Cont}_{\Gamma}^{\mathrm{A}}(\mathfrak{e})
    =&\frac{(-1)^{b_{\mathfrak{e}1}+b_{\mathfrak{e}2}}}{5} \sum_{m=0}^{b_{\mathfrak{e}2}}(-1)^{m} \frac{\widetilde{P}_{3,\mathrm{p}(\mathfrak{v}_1)}^{b_{\mathfrak{e}1}+m}\widetilde{P}_{3,\mathrm{p}(\mathfrak{v}_2)}^{b_{\mathfrak{e}2}-m}}{\zeta^{(b_{\mathfrak{e}1}+m+3)\mathrm{p}(\mathfrak{v}_1)}\zeta^{(b_{\mathfrak{e}2}-m+3)\mathrm{p}(\mathfrak{v}_2)}}\\
    &+\frac{(-1)^{b_{\mathfrak{e}1}+b_{\mathfrak{e}2}}}{5} \sum_{m=1}^{b_{\mathfrak{e}2}}(-1)^{m-1} \frac{\widetilde{P}_{3,\mathrm{p}(\mathfrak{v}_1)}^{b_{\mathfrak{e}1}+m}\widetilde{P}_{3,\mathrm{p}(\mathfrak{v}_2)}^{b_{\mathfrak{e}2}-m}}{\zeta^{(b_{\mathfrak{e}1}+m+3)\mathrm{p}(\mathfrak{v}_1)}\zeta^{(b_{\mathfrak{e}2}-m+3)\mathrm{p}(\mathfrak{v}_2)}}\\
    =&\frac{(-1)^{b_{\mathfrak{e}1}+b_{\mathfrak{e}2}}}{5} \frac{\widetilde{P}_{3,\mathrm{p}(\mathfrak{v}_1)}^{b_{\mathfrak{e}1}}\widetilde{P}_{3,\mathrm{p}(\mathfrak{v}_2)}^{b_{\mathfrak{e}2}}}{\zeta^{(b_{\mathfrak{e}1}+3)\mathrm{p}(\mathfrak{v}_1)}\zeta^{(b_{\mathfrak{e}2}+3)\mathrm{p}(\mathfrak{v}_2)}}.
\end{aligned}
\end{equation*}
\end{proof}

\begin{lem}\label{lem:DA^21_derivative_of_edge}
We have
\begin{equation*}
    \frac{\partial}{\partial (D^2A_1)}\mathrm{Cont}_{\Gamma}^{\mathrm{A}}(\mathfrak{e})
    =\frac{(-1)^{b_{\mathfrak{e}1}+b_{\mathfrak{e}2}}}{5L^2}
    \sum_{m=0}^2(-1)^m\frac{\widetilde{P}_{4,\mathrm{p}(\mathfrak{v}_1)}^{b_{\mathfrak{e}1}+m-2}\widetilde{P}_{4,\mathrm{p}(\mathfrak{v}_2)}^{b_{\mathfrak{e}2}-m}}{\zeta^{(b_{\mathfrak{e}1}+m+2)\mathrm{p}(\mathfrak{e}_1)}\zeta^{(b_{\mathfrak{e}2}-m+4)\mathrm{p}(\mathfrak{v}_2)}}.
\end{equation*}
\end{lem}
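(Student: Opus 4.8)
The plan is to mirror the proof of Lemma~\ref{lem:A2_derivative_of_edge}, differentiating the edge contribution of Proposition~\ref{prop:contributions} termwise but now invoking Lemma~\ref{lem:derivativeof_Pijk_wrtDA1} in place of Lemma~\ref{lem:derivativeof_Pijk_wrtA2}. Applying $\partial/\partial(D^2A_1)$ under the double sum and using the product rule on each summand $\widetilde{P}_{\mathrm{Inv}(r),\mathrm{p}(\mathfrak{v}_1)}^{b_{\mathfrak{e}1}+m+1}\widetilde{P}_{r,\mathrm{p}(\mathfrak{v}_2)}^{b_{\mathfrak{e}2}-m}$, I would use the identity $\partial\widetilde{P}_{i,j}^k/\partial(D^2A_1)=\delta_{i,1}L^{-2}\widetilde{P}_{4,j}^{k-3}$. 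Because of the Kronecker delta, in the inner sum over $r$ the first factor contributes only for $\mathrm{Inv}(r)=1$, i.e.\ $r=4$, and the second factor only for $r=1$. Thus the sum over $r$ collapses to exactly two single terms, one from each application of the product rule.

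Next I would record these two single sums over $m$. The $r=4$ term (using $\mathrm{Inv}(4)=1$) yields precisely the asserted summand, but summed over the full range $0\le m\le b_{\mathfrak{e}2}$; the $r=1$ term (using $\mathrm{Inv}(1)=4$) produces $\widetilde{P}_{4,\mathrm{p}(\mathfrak{v}_1)}^{b_{\mathfrak{e}1}+m+1}\widetilde{P}_{4,\mathrm{p}(\mathfrak{v}_2)}^{b_{\mathfrak{e}2}-m-3}$ with the $\zeta$-exponents shifted accordingly. Since the second factor vanishes once $b_{\mathfrak{e}2}-m-3<0$, I would reindex the $r=1$ sum by the shift $m\mapsto m-3$; a direct substitution (notably, no appeal to $\zeta^5=1$ is needed) makes both the numerator and the two $\zeta$-exponents coincide termwise with those of the $r=4$ sum, while the sign factor acquires $(-1)^{-3}=-1$ and the summation range becomes $3\le m\le b_{\mathfrak{e}2}$.

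The decisive step, and the only place requiring care, is then purely combinatorial: adding the $r=4$ sum (over $0\le m\le b_{\mathfrak{e}2}$) to the reindexed $r=1$ sum (which carries the opposite sign and runs over $3\le m\le b_{\mathfrak{e}2}$) cancels all terms with $m\ge 3$, leaving exactly $m=0,1,2$. This is the claimed formula. I expect the main obstacle to be the bookkeeping in this reindexing step, namely correctly tracking the shift of the two $\zeta$-exponents and the emergent minus sign so that the overlapping ranges telescope cleanly, rather than any conceptual difficulty.
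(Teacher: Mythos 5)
Your proposal is correct and follows exactly the route the paper intends: differentiate the edge contribution termwise via Lemma \ref{lem:derivativeof_Pijk_wrtDA1}, collapse the $r$-sum to the $r=4$ and $r=1$ terms, reindex the latter by $m\mapsto m-3$ (picking up the sign $(-1)^{-3}=-1$), and cancel the overlapping range to leave $m=0,1,2$. The paper's own proof is just a one-line reduction to the argument of Lemma \ref{lem:A2_derivative_of_edge} with the shift $1$ replaced by $3$, which is precisely what you carried out.
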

\begin{proof}
The strategy of proof is similar to that of Lemma \ref{lem:derivativeof_Pijk_wrtA2}. The only difference is that we use Lemma \ref{lem:derivativeof_Pijk_wrtDA1} instead of Lemma \ref{lem:derivativeof_Pijk_wrtA2} and shift one of the summations by $3$ rather than by $1$.
\end{proof}

\begin{thm}[The first holomorphic anomaly equation]\label{thm:HAE_wrt_A2_partial}
For $g\geq{2}$, we have
\begin{equation*}
\frac{C_{3}}{5L}\frac{\partial\mathcal{F}_{g}^{\left[\mathbb{C}^{5} / \mathbb{Z}_{5}\right]}}{\partial A_2}
=\frac{1}{2}\mathcal{F}_{g-1,2}^{\left[\mathbb{C}^5 / \mathbb{Z}_5\right]}\left(\phi_2,\phi_2\right)+\frac{1}{2}\sum_{i=1}^{g-1}\mathcal{F}_{g-i,1}^{\left[\mathbb{C}^5 / \mathbb{Z}_5\right]}\left(\phi_2\right)\mathcal{F}_{i,1}^{\left[\mathbb{C}^5 / \mathbb{Z}_5\right]}\left(\phi_2\right)
\end{equation*}
in $\mathbb{F}[C_1,C_2,C_3]$.
\end{thm}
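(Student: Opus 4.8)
The plan is to differentiate the Givental--Teleman graph sum. Since $\mathcal{F}_g^{[\mathbb{C}^5/\mathbb{Z}_5]}$ carries no insertions, formula (\ref{eqn:formula_Fg}) expresses it as a sum over decorated stable graphs $\Gamma\in\mathrm{G}_{g,0}^{\text{Dec}}(5)$ of products of vertex and edge contributions only (there are no legs). First I would apply $\partial/\partial A_2$ to this sum. By Theorem \ref{thm:vertex_edge_leg_Rings} the vertex contributions lie in $\mathbb{C}[L^{\pm 1}]$ and hence do not involve $A_2$, so the Leibniz rule reduces the derivative to a sum over pairs $(\Gamma,\mathfrak{e}_0)$ of a graph together with one distinguished edge whose contribution is differentiated, every other factor being left untouched.

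The central local identity is the evaluation of $\frac{C_3}{5L}\frac{\partial}{\partial A_2}\mathrm{Cont}_{\Gamma}^{\mathrm{A}}(\mathfrak{e}_0)$. By Lemma \ref{lem:A2_derivative_of_edge} this derivative factorizes as a single product of two terms, one for each endpoint $\mathfrak{v}_1,\mathfrak{v}_2$ of $\mathfrak{e}_0$, each built from $\widetilde{P}_{3,\mathrm{p}(\mathfrak{v})}^{\,b}$. I would compare this with the leg contribution $\mathrm{Cont}_{\Gamma}^{\mathrm{A}}(\mathfrak{l})$ of Proposition \ref{prop:contributions} specialized to the insertion $\phi_2$: there $c_{\ell(\mathfrak{l})}=2$, $\mathrm{Inv}(2)=3$, so the leg is also built from $\widetilde{P}_{3,\cdot}^{\,\cdot}$ and carries the extra factor $K_{\mathrm{Inv}(2)}/L^{\mathrm{Inv}(2)}=K_3/L^3$. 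A direct comparison shows that $\frac{\partial}{\partial A_2}\mathrm{Cont}_{\Gamma}^{\mathrm{A}}(\mathfrak{e}_0)$ equals $\frac{5L^6}{K_3^2}$ times the product of the two $\phi_2$-leg contributions attached at $\mathfrak{v}_1,\mathfrak{v}_2$ with the inherited $\psi$-powers $b_{\mathfrak{e}_01},b_{\mathfrak{e}_02}$. Using $K_3=C_1C_2C_3$ together with $C_1^2C_2^2C_3=L^5$ from (\ref{eq:C12C22C3L5}), one gets $K_3^2=L^5C_3$, so that
\begin{equation*}
\frac{C_3}{5L}\cdot\frac{5L^6}{K_3^2}=\frac{C_3L^5}{L^5C_3}=1.
\end{equation*}
Thus the prefactor $\frac{C_3}{5L}$ is exactly what is needed to convert the differentiated edge into the product of two genuine $\phi_2$-legs; this is what forces the coefficient in the statement.

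Granting this identity, $\frac{C_3}{5L}\frac{\partial}{\partial A_2}\mathcal{F}_g^{[\mathbb{C}^5/\mathbb{Z}_5]}$ becomes the sum over all pairs $(\Gamma,\mathfrak{e}_0)$ of the graph obtained by cutting $\mathfrak{e}_0$ and attaching a $\phi_2$-leg at each of the two freed flags, with all other vertex and edge data unchanged. Cutting preserves the genus, valence, decoration, and hence stability of every vertex, so each summand is itself the contribution of an honest decorated stable graph carrying two $\phi_2$-legs. I would then partition these cut graphs according to the type of $\mathfrak{e}_0$. If $\mathfrak{e}_0$ is non-separating (including the case of a self-edge), the cut graph is connected of genus $g-1$ with two $\phi_2$-legs, and summing over all such contributions reassembles $\mathcal{F}_{g-1,2}^{[\mathbb{C}^5/\mathbb{Z}_5]}(\phi_2,\phi_2)$. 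If $\mathfrak{e}_0$ is separating, the cut graph splits into an ordered pair of connected graphs of genera $g-i$ and $i$, each with one $\phi_2$-leg, and summing reassembles $\sum_{i=1}^{g-1}\mathcal{F}_{g-i,1}^{[\mathbb{C}^5/\mathbb{Z}_5]}(\phi_2)\mathcal{F}_{i,1}^{[\mathbb{C}^5/\mathbb{Z}_5]}(\phi_2)$.

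The main obstacle is the bookkeeping of automorphisms, which is precisely what produces the two factors of $\frac{1}{2}$. Cutting an edge breaks the order-two symmetry that may interchange its two half-edges, so the weights $1/|\mathrm{Aut}(\Gamma)|$ do not match the weights of the cut graphs verbatim; the discrepancy is accounted for by the two ways of labeling the two new $\phi_2$-legs (non-separating case) or the two sides of the separating edge (separating case), each of which the ordered generating functions $\mathcal{F}_{g-1,2}$ and $\sum_i\mathcal{F}_{g-i,1}\mathcal{F}_{i,1}$ count. Performing this symmetrization --- in form identical to the argument of \cite{lho-p2} --- yields exactly the coefficients $\frac{1}{2}$ in both terms and finishes the proof. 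The hypothesis $g\geq2$ ensures all cut graphs are stable and that both the genus-reduction and splitting contributions are present.
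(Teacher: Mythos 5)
Your proposal is correct and follows essentially the same route as the paper's own proof: differentiate the graph sum, note the vertex contributions are $A_2$-independent, use Lemma \ref{lem:A2_derivative_of_edge} to identify $\frac{C_3}{5L}\,\partial_{A_2}$ of an edge contribution with the product of two $\phi_2$-leg contributions via $K_3^2=L^5C_3$, and then reassemble cut graphs into $\mathcal{F}_{g-1,2}(\phi_2,\phi_2)$ and $\sum_i\mathcal{F}_{g-i,1}(\phi_2)\mathcal{F}_{i,1}(\phi_2)$ with the factors of $\tfrac12$ coming from the automorphism and labeling bookkeeping. No substantive differences.
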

\begin{proof}
Let $\Gamma\in\mathrm{G}_{g,0}^{\text{Dec}}(5)$ be a decorated graph and $\tilde{\mathfrak{e}}\in\mathrm{E}_{\Gamma}$ be an edge of $\Gamma$ connecting two vertices $\mathfrak{v}_1$ and $\mathfrak{v}_2$. After deleting the edge $\tilde{\mathfrak{e}}$, we obtain a new graph. (By deleting, we mean breaking the edge $\tilde{\mathfrak{e}}$ into two legs $\mathfrak{l}_{\tilde{\mathfrak{e}}}$ and $\mathfrak{l}'_{\tilde{\mathfrak{e}}}$.) There are two possibilities for the resulting graph after deletion of edge $\mathfrak{e}$:
\begin{itemize}
    \item[(i)] If it is connected, then we obtain an element of $\mathrm{G}_{g-1,2}^{\text{Dec}}(5)$, which we denote as $\Gamma_{\tilde{\mathfrak{e}}}^0$.
    
    \item[(ii)] If it is disconnected, then the resulting graph has two connected components, which we  denote as $\Gamma_{\tilde{\mathfrak{e}}}^1\in\mathrm{G}_{g_1,1}^{\text{Dec}}(5)$ and $\Gamma_{\tilde{\mathfrak{e}}}^2\in\mathrm{G}_{g_2,1}^{\text{Dec}}(5)$ where we have $g=g_1+g_2$.
\end{itemize}

By Proposition \ref{prop:contributions} and Lemma \ref{lem:A2_derivative_of_edge}, we observe that
\begin{equation*}
\begin{aligned}
\frac{\partial \mathrm{Cont}_{\Gamma}^{\mathrm{A}}(\tilde{\mathfrak{e}}) }{\partial A_2}
=&\frac{(-1)^{b_{\tilde{\mathfrak{e}}1}+b_{\tilde{\mathfrak{e}}2}}}{5} \frac{\widetilde{P}_{3,\mathrm{p}(\mathfrak{v}_1)}^{b_{\tilde{\mathfrak{e}}1}}\widetilde{P}_{3,\mathrm{p}(\mathfrak{v}_2)}^{b_{\tilde{\mathfrak{e}}2}}}{\zeta^{(b_{\tilde{\mathfrak{e}}1}+3)\mathrm{p}(\mathfrak{v}_1)}\zeta^{(b_{\tilde{\mathfrak{e}}2}+3)\mathrm{p}(\mathfrak{v}_2)}}\\
=&5\left(\frac{L^{3}}{K_{3}}\right)^2
\begin{cases}
\mathrm{Cont}_{\Gamma_{\tilde{\mathfrak{e}}}^0}^{\mathrm{A}}(\mathfrak{l}_{\tilde{\mathfrak{e}}})\mathrm{Cont}_{\Gamma_{\tilde{\mathfrak{e}}}^0}^{\mathrm{A}}(\mathfrak{l}'_{\tilde{\mathfrak{e}}})&\text{for the case (i)},\\
\mathrm{Cont}_{\Gamma_{\tilde{\mathfrak{e}}}^1}^{\mathrm{A}}(\mathfrak{l}_{\tilde{\mathfrak{e}}})\mathrm{Cont}_{\Gamma_{\tilde{\mathfrak{e}}}^2}^{\mathrm{A}}(\mathfrak{l}'_{\tilde{\mathfrak{e}}})&\text{for the case (ii)}
\end{cases}
\end{aligned}
\end{equation*}
with $\ell(\mathfrak{l}_{\tilde{\mathfrak{e}}})=\ell(\mathfrak{l}'_{\tilde{\mathfrak{e}}})=2$, i.e., with insertions $\phi_2$
.

By definition of $K_3$ and equation (\ref{eq:C12C22C3L5}), we also note that 
\begin{equation*}
\left(\frac{L^{3}}{K_{3}}\right)^2=\frac{L}{C_{3}}.
\end{equation*}
Then, for case (i), we easily see that we have
\begin{equation}\label{eq:2_insertions_graph_cont}
\begin{aligned}
\operatorname{Cont}_{\Gamma_{\tilde{\mathfrak{e}}}^0}\left(\phi_2,\phi_2\right)
=&\frac{1}{|\mathrm{Aut}(\Gamma_{\tilde{\mathfrak{e}}}^{0,\mathrm{St}})|} \sum_{\mathrm{A} \in \mathbb{Z}_{\geq 0}^{\mathrm{F}({\Gamma_{\tilde{\mathfrak{e}}}^0})}} \prod_{\mathfrak{v} \in \mathrm{V}_{\Gamma_{\tilde{\mathfrak{e}}}^0}} \mathrm{Cont}_{\Gamma_{\tilde{\mathfrak{e}}}^0}^{\mathrm{A}}(\mathfrak{v}) \prod_{\mathfrak{e} \in \mathrm{E}_{\Gamma_{\tilde{\mathfrak{e}}}^0}} \mathrm{Cont}_{\Gamma_{\tilde{\mathfrak{e}}}^0}^{\mathrm{A}}(\mathfrak{e})\prod_{\mathfrak{l} \in \mathrm{L}_{\Gamma_{\tilde{\mathfrak{e}}}^0}} \mathrm{Cont}_{\Gamma_{\tilde{\mathfrak{e}}}^0}^{\mathrm{A}}(\mathfrak{l})\\
=&\frac{1}{|\mathrm{Aut}(\Gamma_{\tilde{\mathfrak{e}}}^{0,\mathrm{St}})|} \sum_{\mathrm{A} \in \mathbb{Z}_{\geq 0}^{\mathrm{F}(\Gamma)}}\frac{C_3}{5L}\frac{\partial \mathrm{Cont}_{\Gamma}^{\mathrm{A}}(\tilde{\mathfrak{e}}) }{\partial A_2} \prod_{\mathfrak{v} \in \mathrm{V}_{\Gamma}} \mathrm{Cont}_{\Gamma}^{\mathrm{A}}(\mathfrak{v}) \prod_{\substack{
\mathfrak{e} \in \mathrm{E}_{\Gamma} \\ \mathfrak{e}\neq \tilde{\mathfrak{e}}}} \mathrm{Cont}_{\Gamma}^{\mathrm{A}}(\mathfrak{e}).
\end{aligned}
\end{equation}
Similarly, for case (ii), we observe the following
\begin{equation}\label{eq:prod_of_1_insertions_graph_conts}
\begin{aligned}
\operatorname{Cont}_{\Gamma_{\tilde{\mathfrak{e}}}^1}\left(\phi_{2}\right)&\operatorname{Cont}_{\Gamma_{\tilde{\mathfrak{e}}}^2}\left(\phi_{2}\right)\\
=&\frac{1}{|\mathrm{Aut}(\Gamma_{\tilde{\mathfrak{e}}}^{1,\mathrm{St}})|} \sum_{\mathrm{A} \in \mathbb{Z}_{\geq 0}^{\mathrm{F}({\Gamma_{\tilde{\mathfrak{e}}}^1})}} \mathrm{Cont}_{\Gamma_{\tilde{\mathfrak{e}}}^1}^{\mathrm{A}}(\mathfrak{l}_{\tilde{\mathfrak{e}}})\prod_{\mathfrak{v} \in \mathrm{V}_{\Gamma_{\tilde{\mathfrak{e}}}^1}} \mathrm{Cont}_{\Gamma_{\tilde{\mathfrak{e}}}^1}^{\mathrm{A}}(\mathfrak{v}) \prod_{\mathfrak{e} \in \mathrm{E}_{\Gamma_{\tilde{\mathfrak{e}}}^1}} \mathrm{Cont}_{\Gamma_{\tilde{\mathfrak{e}}}^1}^{\mathrm{A}}(\mathfrak{e})\\
\times&\frac{1}{|\mathrm{Aut}(\Gamma_{\tilde{\mathfrak{e}}}^{2,\mathrm{St}})|} \sum_{\mathrm{A} \in \mathbb{Z}_{\geq 0}^{\mathrm{F}({\Gamma_{\tilde{\mathfrak{e}}}^2})}} \mathrm{Cont}_{\Gamma_{\tilde{\mathfrak{e}}}^2}^{\mathrm{A}}(\mathfrak{l}'_{\tilde{\mathfrak{e}}}) \prod_{v \in \mathrm{V}_{\Gamma_{\tilde{\mathfrak{e}}}^2}} \mathrm{Cont}_{\Gamma_{\tilde{\mathfrak{e}}}^2}^{\mathrm{A}}(\mathfrak{v}) \prod_{\mathfrak{e} \in \mathrm{E}_{\Gamma_{\tilde{\mathfrak{e}}}^2}} \mathrm{Cont}_{\Gamma_{\tilde{\mathfrak{e}}}^2}^{\mathrm{A}}(\mathfrak{e})\\
=&\frac{1}{|\mathrm{Aut}(\Gamma_{\tilde{\mathfrak{e}}}^{1,\mathrm{St}})||\mathrm{Aut}(\Gamma_{\tilde{\mathfrak{e}}}^{,\mathrm{St}})|} \sum_{\mathrm{A} \in \mathbb{Z}_{\geq 0}^{\mathrm{F}(\Gamma)}}\frac{C_{3}}{5L}\frac{\partial \mathrm{Cont}_{\Gamma}^{\mathrm{A}}(\tilde{\mathfrak{e}}) }{\partial A_2} \prod_{\mathfrak{v} \in \mathrm{V}_{\Gamma}} \mathrm{Cont}_{\Gamma}^{\mathrm{A}}(\mathfrak{v}) \prod_{\substack{
\mathfrak{e} \in \mathrm{E}_{\Gamma} \\ \mathfrak{e}\neq \tilde{\mathfrak{e}}}} \mathrm{Cont}_{\Gamma}^{\mathrm{A}}(\mathfrak{e}).
\end{aligned}
\end{equation}
By Lemma \ref{lem:P0jk_is_in_CLplusminus} and Theorem \ref{thm:vertex_edge_leg_Rings}, we have the following vanishing result:
\begin{equation*}
\frac{\partial \mathrm{Cont}_{\Gamma}^{\mathrm{A}}(\mathfrak{v}) }{\partial A_2}=0
\end{equation*}
for any vertex $\mathfrak{v}\in\mathrm{V}_{\Gamma} $.

Then, this vanishing result gives us the following
\begin{equation*}
\begin{aligned}
\frac{\partial \mathrm{Cont}_{\Gamma}}{\partial A_2}
=&\frac{1}{|\mathrm{Aut}(\Gamma^{\mathrm{St}})|} \sum_{\mathrm{A} \in \mathbb{Z}_{\geq 0}^{\mathrm{F}(\Gamma)}} \prod_{\mathfrak{v} \in \mathrm{V}_{\Gamma}} \mathrm{Cont}_{\Gamma}^{\mathrm{A}}(\mathfrak{v})\frac{\partial}{\partial A_2}\left(\prod_{\mathfrak{e}\in \mathrm{E}_{\Gamma}} \mathrm{Cont}_{\Gamma}^{\mathrm{A}}(\mathfrak{e})\right)\\
=&\sum_{\tilde{\mathfrak{e}}\in \mathrm{E}_{\Gamma}}\frac{1}{|\mathrm{Aut}(\Gamma^{\mathrm{St}})|} \sum_{\mathrm{A} \in \mathbb{Z}_{\geq 0}^{\mathrm{F}(\Gamma)}}\frac{\partial \mathrm{Cont}_{\Gamma}^{\mathrm{A}}(\tilde{\mathfrak{e}}) }{\partial A_2} \prod_{\mathfrak{v} \in \mathrm{V}_{\Gamma}} \mathrm{Cont}_{\Gamma}^{\mathrm{A}}(\mathfrak{v})\prod_{\substack{
\mathfrak{e} \in \mathrm{E}_{\Gamma} \\ \mathfrak{e}\neq \tilde{\mathfrak{e}}}} \mathrm{Cont}_{\Gamma}^{\mathrm{A}}(\mathfrak{e}).
\end{aligned}
\end{equation*}
So, we have
\begin{equation}\label{eq:A2_Derivative_of_Gamma_cont}
\frac{C_{3}}{5L}\frac{\partial \mathrm{Cont}_{\Gamma}}{\partial A_2}
=\sum_{\tilde{\mathfrak{e}}\in \mathrm{E}_{\Gamma}}\frac{1}{|\mathrm{Aut}(\Gamma^{\mathrm{St}})|} \sum_{\mathrm{A} \in \mathbb{Z}_{\geq 0}^{\mathrm{F}(\Gamma)}}\frac{C_{3}}{5L}\frac{\partial \mathrm{Cont}_{\Gamma}^{\mathrm{A}}(\tilde{\mathfrak{e}}) }{\partial A_2} \prod_{\mathfrak{v} \in \mathrm{V}_{\Gamma}} \mathrm{Cont}_{\Gamma}^{\mathrm{A}}(\mathfrak{v})\prod_{\substack{
\mathfrak{e} \in \mathrm{E}_{\Gamma} \\ \mathfrak{e}\neq \tilde{\mathfrak{e}}}} \mathrm{Cont}_{\Gamma}^{\mathrm{A}}(\mathfrak{e}).
\end{equation}
Then, summing equation (\ref{eq:2_insertions_graph_cont}) and equation (\ref{eq:prod_of_1_insertions_graph_conts}) over all decorated stable graphs $\Gamma_{\tilde{\mathfrak{e}}}^0$ and $(\Gamma_{\tilde{\mathfrak{e}}}^1,\Gamma_{\tilde{\mathfrak{e}}}^2)$
we obtain
\begin{equation*}
\left\langle\left\langle\phi_{2},\phi_{2}\right\rangle\right\rangle_{g-1,2}^{\left[\mathbb{C}^5 / \mathbb{Z}_5\right]}\quad\text{and}\quad\sum_{i=1}^{g-1}\left\langle\left\langle\phi_{2}\right\rangle\right\rangle_{g-i,1}^{\left[\mathbb{C}^5 / \mathbb{Z}_5\right]}\left\langle\left\langle\phi_{2}\right\rangle\right\rangle_{i,1}^{\left[\mathbb{C}^5 / \mathbb{Z}_5\right]}
\end{equation*}
respectively. Then, by equation (\ref{eq:A2_Derivative_of_Gamma_cont}) and an analysis of automorphism factors appearing in equations (\ref{eq:2_insertions_graph_cont}), (\ref{eq:prod_of_1_insertions_graph_conts}), we conclude that
\begin{equation*}
2\frac{C_{3}}{5L}\frac{\partial}{\partial A_2}\left\langle\left\langle\right\rangle\right\rangle_{g}^{\left[\mathbb{C}^5 / \mathbb{Z}_5\right]}
=\left\langle\left\langle\phi_{2},\phi_{2}\right\rangle\right\rangle_{g-1,2}^{\left[\mathbb{C}^5 / \mathbb{Z}_5\right]}+\sum_{i=1}^{g-1}\left\langle\left\langle\phi_{2}\right\rangle\right\rangle_{g-i,1}^{\left[\mathbb{C}^5 / \mathbb{Z}_5\right]}\left\langle\left\langle\phi_{2}\right\rangle\right\rangle_{i,1}^{\left[\mathbb{C}^5 / \mathbb{Z}_5\right]}
\end{equation*}
after summing over all decorated stable graphs $\Gamma$. The reason we have $2$ in front of the left hand side is due to not having a canonical order of labelings of each of the legs $\mathfrak{l}_{\tilde{\mathfrak{e}}}$ and $\mathfrak{l}'_{\tilde{\mathfrak{e}}}$ for case (i) and double counting  of different genera of connected components for case (ii). This completes the proof. 
\end{proof}

Let $\pi$ be the morphism to the moduli space of stable curves determined by the domain, $$\pi:\overline{M}_{g, k}^{\mathrm{orb}}\left(\left[\mathbb{C}^{5} / \mathbb{Z}_{5}\right], 0\right)\rightarrow \overline{M}_{g, k}.$$
We define the Gromov-Witten potentials with certain types of ancestor insertions by
\begin{equation*}
\mathcal{F}_{g, n}^{\left[\mathbb{C}^{5} / \mathbb{Z}_{5}\right]}\left(\phi_{c_{1}}\psi_1^{a_1}, \ldots, \phi_{c_{n}}\psi_n^{a_n}\right)=\sum_{d=0}^{\infty} \frac{\Theta^{d}}{d !} \int_{\left[\overline{M}_{g, n+d}^{\mathrm{orb}}\left(\left[\mathbb{C}^{5} / \mathbb{Z}_{5}\right], 0\right)\right]^{v i r}} \prod_{i=1}^{n} \mathrm{ev}_{i}^{*}\left(\phi_{c_{i}})\pi^{*}(\psi_i^{a_i}\right) \prod_{i=n+1}^{n+d} \mathrm{ev}_{i}^{*}\left(\phi_{1}\right).
\end{equation*}

These Gromov-Witten potentials can also be written as a sum over decorated stables graphs as in Proposition \ref{prop:contributions}. The only difference with Proposition \ref{prop:contributions} happens at vertex contributions and it is the shift in the powers of $\psi$ classes according to the insertions. The rest is the same. Hence, we again conclude that
$$\mathcal{F}_{g, n}^{\left[\mathbb{C}^{5} / \mathbb{Z}_{5}\right]}\left(\phi_{c_{1}}\psi_1^{a_1}, \ldots, \phi_{c_{n}}\psi_n^{a_n}\right)\in\mathbb{F}[C_1,C_2,C_3].$$
Now, we are ready to state another holomorphic anomaly equation.

\begin{thm}[The second holomorphic anomaly equation]\label{thm:2nd_HAE}
For $g\geq{2}$, we have
\begin{equation*}
\begin{aligned}
\frac{C_2^2C_3}{5L^3}\frac{\partial\mathcal{F}_{g}^{\left[\mathbb{C}^{5} / \mathbb{Z}_{5}\right]}}{\partial (D^2A_1)}
=&\mathcal{F}_{g-1,2}^{\left[\mathbb{C}^5 / \mathbb{Z}_5\right]}\left(\phi_1\psi_1^2,\phi_1\right)+\sum_{i=1}^{g-1}\mathcal{F}_{g-i,1}^{\left[\mathbb{C}^5 / \mathbb{Z}_5\right]}\left(\phi_1\psi_1^2\right)\mathcal{F}_{i,1}^{\left[\mathbb{C}^5 / \mathbb{Z}_5\right]}\left(\phi_1\right)\\
&-\frac{1}{2}\mathcal{F}_{g-1,2}^{\left[\mathbb{C}^5 / \mathbb{Z}_5\right]}\left(\phi_1\psi_1,\phi_1\psi_2\right) -\frac{1}{2}\sum_{i=1}^{g-1}\mathcal{F}_{g-i,1}^{\left[\mathbb{C}^5 / \mathbb{Z}_5\right]}\left(\phi_1\psi_1\right)\mathcal{F}_{i,1}^{\left[\mathbb{C}^5 / \mathbb{Z}_5\right]}\left(\phi_1\psi_1\right)  
\end{aligned}
\end{equation*}
in $\mathbb{F}[C_1,C_2,C_3]$.
\end{thm}

\begin{proof}
The proof is similar to that of Theorem \ref{thm:HAE_wrt_A2_partial} with some technical difference. Instead of giving full details, this time we point out these different technicalities. Throughout the proof, let $\Gamma$, $\tilde{\mathfrak{e}}$, $\mathfrak{v}_1$, $\mathfrak{v}_2$, $\mathfrak{l}_{\tilde{\mathfrak{e}}}$, $\mathfrak{l}'_{\tilde{\mathfrak{e}}}$, $\Gamma_{\tilde{\mathfrak{e}}}^0$, $\Gamma_{\tilde{\mathfrak{e}}}^1$, $\Gamma_{\tilde{\mathfrak{e}}}^2$, ``case (i)'' and ``case (ii)'' be as in the proof of Theorem \ref{thm:HAE_wrt_A2_partial}.

By Proposition \ref{prop:contributions} and Lemma \ref{lem:DA^21_derivative_of_edge}, we have
\begin{equation}\label{eq:partial_wrt_DA21_in_proof}
\begin{aligned}
    \frac{\partial}{\partial (D^2A_1)}\mathrm{Cont}_{\Gamma}^{\mathrm{A}}(\mathfrak{e})
    =&\frac{(-1)^{b_{\tilde{\mathfrak{e}}1}+b_{\tilde{\mathfrak{e}}2}}}{5L^2}
    \frac{\widetilde{P}_{4,\mathrm{p}(\mathfrak{v}_1)}^{b_{\tilde{\mathfrak{e}}1}-2}\widetilde{P}_{4,\mathrm{p}(\mathfrak{v}_2)}^{b_{\tilde{\mathfrak{e}}2}}}{\zeta^{(b_{\tilde{\mathfrak{e}}1}+2)\mathrm{p}(\tilde{\mathfrak{e}}_1)}\zeta^{(b_{\tilde{\mathfrak{e}}2}+4)\mathrm{p}(\mathfrak{v}_2)}}\\
    &-\frac{(-1)^{b_{\tilde{\mathfrak{e}}1}+b_{\tilde{\mathfrak{e}}2}}}{5L^2}
    \frac{\widetilde{P}_{4,\mathrm{p}(\mathfrak{v}_1)}^{b_{\tilde{\mathfrak{e}}1}-1}\widetilde{P}_{4,\mathrm{p}(\mathfrak{v}_2)}^{b_{\tilde{\mathfrak{e}}2}-1}}{\zeta^{(b_{\tilde{\mathfrak{e}}1}+3)\mathrm{p}(\tilde{\mathfrak{e}}_1)}\zeta^{(b_{\tilde{\mathfrak{e}}2}+3)\mathrm{p}(\mathfrak{v}_2)}}\\
    &+\frac{(-1)^{b_{\tilde{\mathfrak{e}}1}+b_{\tilde{\mathfrak{e}}2}}}{5L^2}
    \frac{\widetilde{P}_{4,\mathrm{p}(\mathfrak{v}_1)}^{b_{\tilde{\mathfrak{e}}1}}\widetilde{P}_{4,\mathrm{p}(\mathfrak{v}_2)}^{b_{\tilde{\mathfrak{e}}2}-2}}{\zeta^{(b_{\tilde{\mathfrak{e}}1}+4)\mathrm{p}(\tilde{\mathfrak{e}}_1)}\zeta^{(b_{\tilde{\mathfrak{e}}2}+2)\mathrm{p}(\mathfrak{v}_2)}},
\end{aligned}
\end{equation}
where right hand side of this equation is equal to
\begin{equation*}
5\left(\frac{L^{4}}{K_{4}}\right)^2\left(
\mathrm{Cont}_{\Gamma_{\tilde{\mathfrak{e}}}^0}^{\mathrm{A}_{b_{\tilde{\mathfrak{e}}1}-2}}(\mathfrak{l}_{\tilde{\mathfrak{e}}})\mathrm{Cont}_{\Gamma_{\tilde{\tilde{\mathfrak{e}}}}^0}^{\mathrm{A}}(\mathfrak{l}'_{\tilde{\mathfrak{e}}})-\mathrm{Cont}_{\Gamma_{\tilde{\mathfrak{e}}}^0}^{\mathrm{A}_{b_{\tilde{\mathfrak{e}}1}-1}}(\mathfrak{l}_{\tilde{\mathfrak{e}}})\mathrm{Cont}_{\Gamma_{\tilde{\mathfrak{e}}}^0}^{\mathrm{A}_{b_{\tilde{\mathfrak{e}}2}-1}}(\mathfrak{l}'_{\tilde{\mathfrak{e}}})+\mathrm{Cont}_{\Gamma_{\tilde{\mathfrak{e}}}^0}^{\mathrm{A}}(\mathfrak{l}_{\tilde{\mathfrak{e}}})\mathrm{Cont}_{\Gamma_{\tilde{\mathfrak{e}}}^0}^{\mathrm{A}_{b_{\tilde{\mathfrak{e}}2}-2}}(\mathfrak{l}'_{\tilde{\mathfrak{e}}})\right)
\end{equation*}
for the case (i), and it is equal to
\begin{equation*}
5\left(\frac{L^{4}}{K_{4}}\right)^2\left(\mathrm{Cont}_{\Gamma_{\tilde{\mathfrak{e}}}^1}^{\mathrm{A}_{b_{\tilde{\mathfrak{e}}1}-2}}(\mathfrak{l}_{\tilde{\mathfrak{e}}})\mathrm{Cont}_{\Gamma_{\tilde{\mathfrak{e}}}^2}^{\mathrm{A}}(\mathfrak{l}'_{\tilde{\mathfrak{e}}})
-\mathrm{Cont}_{\Gamma_{\tilde{\mathfrak{e}}}^1}^{\mathrm{A}_{b_{\tilde{\mathfrak{e}}1}-1}}(\mathfrak{l}_{\tilde{\mathfrak{e}}})\mathrm{Cont}_{\Gamma_{\tilde{\mathfrak{e}}}^2}^{\mathrm{A}_{b_{\tilde{\mathfrak{e}}2}-1}}(\mathfrak{l}'_{\tilde{\mathfrak{e}}})
+\mathrm{Cont}_{\Gamma_{\tilde{\mathfrak{e}}}^1}^{\mathrm{A}}(\mathfrak{l}_{\tilde{\mathfrak{e}}})\mathrm{Cont}_{\Gamma_{\tilde{\mathfrak{e}}}^2}^{\mathrm{A}_{b_{\tilde{\mathfrak{e}}2}-2}}(\mathfrak{l}'_{\tilde{\mathfrak{e}}})
\right)
\end{equation*}
for the case (ii), with $\ell(\mathfrak{l}_{\tilde{\mathfrak{e}}})=\ell(\mathfrak{l}'_{\tilde{\mathfrak{e}}})=1$. Here, by $\mathrm{A}_{b_{\tilde{\mathfrak{e}}s}-r}$ we mean the flag value $b_{\tilde{\mathfrak{e}}s}$ is shifted by $r$ in $\mathrm{A}\in\mathbb{Z}_{\geq 0}^{\mathrm{F}(\bullet)}$ where $\bullet$ is $\Gamma_{\tilde{\mathfrak{e}}}^0$, $\Gamma_{\tilde{\mathfrak{e}}}^1$ or $\Gamma_{\tilde{\mathfrak{e}}}^2$. Also note that, by definition of $K_4$ and equation (\ref{eq:C12C22C3L5}), we have
\begin{equation*}
\left(\frac{L^{4}}{K_{4}}\right)^2=\frac{L^3}{C_2^2C_3}.
\end{equation*}

By shifting the flag values by $r$ in equation (\ref{eq:partial_wrt_DA21_in_proof}), we get
\begin{equation}\label{eq:alternate_sum_after_derivative}
\frac{(-1)^{b_{\tilde{\mathfrak{e}}1}+b_{\tilde{\mathfrak{e}}2}}}{5L^2}\left(
    \frac{\widetilde{P}_{4,\mathrm{p}(\mathfrak{v}_1)}^{b_{\tilde{\mathfrak{e}}1}}\widetilde{P}_{4,\mathrm{p}(\mathfrak{v}_2)}^{b_{\tilde{\mathfrak{e}}2}}}{\zeta^{(b_{\tilde{\mathfrak{e}}1}+4)\mathrm{p}(\tilde{\mathfrak{e}}_1)}\zeta^{(b_{\tilde{\mathfrak{e}}2}+4)\mathrm{p}(\mathfrak{v}_2)}}
    -\frac{\widetilde{P}_{4,\mathrm{p}(\mathfrak{v}_1)}^{b_{\tilde{\mathfrak{e}}1}}\widetilde{P}_{4,\mathrm{p}(\mathfrak{v}_2)}^{b_{\tilde{\mathfrak{e}}2}}}{\zeta^{(b_{\tilde{\mathfrak{e}}1}+4)\mathrm{p}(\tilde{\mathfrak{e}}_1)}\zeta^{(b_{\tilde{\mathfrak{e}}2}+4)\mathrm{p}(\mathfrak{v}_2)}}
    +\frac{\widetilde{P}_{4,\mathrm{p}(\mathfrak{v}_1)}^{b_{\tilde{\mathfrak{e}}1}}\widetilde{P}_{4,\mathrm{p}(\mathfrak{v}_2)}^{b_{\tilde{\mathfrak{e}}2}}}{\zeta^{(b_{\tilde{\mathfrak{e}}1}+4)\mathrm{p}(\tilde{\mathfrak{e}}_1)}\zeta^{(b_{\tilde{\mathfrak{e}}2}+4)\mathrm{p}(\mathfrak{v}_2)}}
    \right),
\end{equation}
where $r$ is $2$ in the first term for $b_{\tilde{\mathfrak{e}}1}$, $r$ is $1$ in the second term for $b_{\tilde{\mathfrak{e}}1}$, $b_{\tilde{\mathfrak{e}}2}$, and $r$ is $2$ in the last term for $b_{\tilde{\mathfrak{e}}2}$. If we make these shifts after taking the derivative of a graph contribution with respect to $D^2A_1$, they will also effect the contributions of vertices that $\tilde{\mathfrak{e}}$ is attached to. More precisely, they will also shift the powers of the corresponding $\psi$ classes appearing in the contributions of the vertices that $\tilde{\mathfrak{e}}$ is attached to.

If we adapt of the proof of Theorem \ref{thm:HAE_wrt_A2_partial} at this point, then the first and last terms in equation (\ref{eq:alternate_sum_after_derivative}) explain the terms
$$\mathcal{F}_{g-1,2}^{\left[\mathbb{C}^5 / \mathbb{Z}_5\right]}\left(\phi_1\psi_1^2,\phi_1\right)+\sum_{i=1}^{g-1}\mathcal{F}_{g-i,1}^{\left[\mathbb{C}^5 / \mathbb{Z}_5\right]}\left(\phi_1\psi_1^2\right)\mathcal{F}_{i,1}^{\left[\mathbb{C}^5 / \mathbb{Z}_5\right]}\left(\phi_1\right)$$
in the holomorhic anomaly equation and the middle term in equation (\ref{eq:alternate_sum_after_derivative}) explains
the terms
$$-\frac{1}{2}\mathcal{F}_{g-1,2}^{\left[\mathbb{C}^5 / \mathbb{Z}_5\right]}\left(\phi_1\psi_1,\phi_1\psi_2\right) -\frac{1}{2}\sum_{i=1}^{g-1}\mathcal{F}_{g-i,1}^{\left[\mathbb{C}^5 / \mathbb{Z}_5\right]}\left(\phi_1\psi_1\right)\mathcal{F}_{i,1}^{\left[\mathbb{C}^5 / \mathbb{Z}_5\right]}\left(\phi_1\psi_1\right)$$
in the holomorphic anomaly equation.
\end{proof}

\end{document}